\theoremstyle{plain}
\newtheorem{theorem}{Theorem}[section]
\newtheorem{lemma}[theorem]{Lemma}
\newtheorem{proposition}[theorem]{Proposition}
\theoremstyle{definition}
\newcommand{\R}{\mathbb{R}}
\newcommand{\D}{\mathbb{D}}
\renewcommand{\d}{\partial}
\renewcommand{\div}{{\rm div}\,}
\newcommand{\ut}{\underline{\theta}}
\newcommand{\un}{\underline{\nu}}
\newcommand{\uk}{\underline{\kappa}}
\newcommand{\LpLinfty}{L_p(0,T;L_\infty)}
\newcommand{\LinftyLp}{L_\infty(0,T;L_p)}
\numberwithin{equation}{section}
\title{Stability of high-temperature viscous flows. A case of pudding model.}
\author[P. ~B.~Mucha]{Piotr B. Mucha}
\address{Piotr B. Mucha, University of Warsaw, Institute of Applied Mathematics and Mechanics,
Banacha~2, 02-097 Warsaw, Poland}
\email{pbmucha@mimuw.edu.pl}
\author[A.~\'{S}wierczewska-Gwiazda]{Agnieszka \'{S}wierczewska-Gwiazda}
\address{Agnieszka \'{S}wierczewska-Gwiazda, University of Warsaw, Institute of Applied Mathematics and Mechanics,
Banacha~2, 02-097 Warsaw, Poland}
\email{aswiercz@mimuw.edu.pl}
\begin{document}
\maketitle\maketitle

\begin{abstract}
We  investigate the Navier-Stokes-Fourier system for incompressible heat conducting inhomogeneous
fluid. The main result concerns existence of global in time regular large solutions, provided the initial
temperature is sufficiently large. The system may be viewed as a model of pudding, as we assume  the viscosity grows with the temperature.
\end{abstract}
\section{Introduction}
Our interest is directed to  incompressible inhomogeneous heat conducting fluids. The flow is described by the following equations
\begin{equation}\label{main}
 \begin{array}{lr}
  \varrho_t +v\cdot\nabla \varrho =0,& \mbox{ in \ \ } (0,T)\times\Omega,\\
  \varrho v_t + \varrho v \cdot \nabla v - \div (\nu (\theta) \D(v)) + \nabla \pi =0 & \mbox{ in \ \ } (0,T)\times\Omega,\\
 \div v=0 & \mbox{in \ \ } (0,T)\times\Omega,\\
\varrho\theta_t + \varrho v \cdot \nabla \theta - \div(\kappa(\theta) \nabla \theta) = \nu(\theta) \D^2(v) & \mbox{ in \ \ } (0,T)\times\Omega,
 \end{array}
\end{equation}
where $\varrho: (0,T)\times\Omega\to\R, v: (0,T)\times\Omega\to\R^3, \theta:(0,T)\times\Omega\to\R$ and $\pi:(0,T)\times\Omega\to\R$ represent the density, velocity, temperature and pressure respectively.  
Given functions $\nu:\R\to\R$ and $\kappa:\R\to\R$ are the viscosity and heat conductivity. By $\D(v)$ we mean the symmetric gradient of a velocity field. 
The system is supplemented with the boundary conditions
\begin{equation}\label{i2}
 v=0, \quad \frac{\d}{\d n} \theta =0 \qquad \mbox{ at }  (0,T)\times \d\Omega
\end{equation}
and initial data
\begin{equation}\label{i3}
 v|_{t=0}=v_0, \qquad \theta|_{t=0}=\theta_0, \qquad \varrho|_{t=0}=\varrho_0 \mbox{ \ \ \  at } \Omega.
\end{equation}

Our aim is to prove global in time existence of regular solutions without restrictions on the smallness of the data. The system (\ref{main}) is a modification of the classical
 Navier-Stokes-Fourier model \cite{BuMaSh2014}
for constant density fluids. Here we introduce a natural generalization on inhomogeneous 
fluids in order to justify the assumption of increasing viscous coefficient $\nu(\cdot)$. 
The occurrence of increasing viscosity is very natural for compressible fluids. And indeed,  the   variable density setting  brings the system closer to a compressible model, having for instance in mind 
the so-called
slightly compressible flows, cf.~\cite{Li96}. 
On the other hand, some motivations to study the above described system, although at first glance rather counterintuitive for incompressible fluids,  arise from such simple matters as e.g. pudding. 
 This definitely incompressible fluid, once heated, experiences the growth of  viscosity.

The system is thermodynamically isolated, for sufficiently
smooth solution the total energy  is conserved
\begin{equation}
\int_\Omega (\frac 12 \varrho| v|^2 + \varrho \theta)(t) \ dx =
\int_\Omega (\frac 12 \varrho_0 |v_0|^2 + \varrho_0 \theta_0) \ dx.
\end{equation}
Assuming that the initial temperature is sufficiently large, and as the temperature of the fluid 
 remains bounded  from below by the initial bound, then consequently the viscosity 
 will never be below the minimum of the initial viscosity.  This noteworthy observation 
 will play a key role in our  considerations.
\medskip 

System \eqref{main} is well investigated in the form for homogeneous fluids (with constant density).
In the literature  we find almost complete theory of existence of weak solutions for general structure 
\cite{BuFeMa2009} and some nontrivial results about regular solutions \cite{BuKaMa2011}. However
the theory is far from being complete. 
 Still the interplay between viscous term $\div \nu(\theta) \D(v)$
and energy balance term $\nu(\theta) \D^2(v)$ is not well understood. The present article is a step towards 
 this direction. We will observe that for high/large temperature $\theta$, the behavior of these terms
indeed leads to the stabilization of the system.  

\smallskip 

We restrict to initial densities, which are small perturbations of a constant density
\begin{equation}\label{ass1}
\|1-\varrho_0\|_{L_\infty((0,T)\times\Omega)}< c.
\end{equation}
Here $c$ denotes a small constant  compared to constants from estimates for linear systems -- 
see Section 2. What is important,  $c$ is independent of the solution, in particular of the initial data.
Clearly, the total mass of the fluid is preserved and
\begin{equation}\label{mass}
 \int_\Omega \rho(t) dx = \int_\Omega \rho_0 dx = {\rm mass}.
\end{equation}

The kernel of the system is a structure of the viscosity and heat conductivity,  we assume that 
\begin{equation}\label{ass2}
\nu (\theta)= \theta^m , \qquad m >0, \qquad \kappa(\theta) = (1+\theta)^l, \qquad l \geq 0.
\end{equation}
The growth of $\nu(\cdot)$ is necessary. We underline that the case  $l=0$ is also covered.

The main result of the present paper is the following.

\smallskip

\begin{theorem}\label{Th:main} Given $p>7$,
let $v_0\in W^{2-2/p}_p(\Omega)$, $\theta_0\in W^{2-4/p}_{p/2}(\Omega)$
and $\varrho_0\in L_\infty(\Omega)\cap W^1_p(\Omega)$, in addition $\varrho_0$  fulfills  
\eqref{ass1}. Assume that 
\begin{equation}\label{i4}
 \underline{\theta}= \inf \theta_0
\end{equation}
and  $\kappa(\cdot),\nu(\cdot)$ fulfill (\ref{ass2}). 

Then, provided $\ut$ is sufficiently large,  there exists a regular global in time solution to system (\ref{main}),  satisfying
\begin{equation}
\begin{split}
v\in L_\infty(0,\infty; W^{2-2/p}_p(\Omega))\cap W^{1,2}_p((0,\infty)\times\Omega),\\
\varrho\in L_\infty((0,\infty)\times \Omega)\cap C([0,\infty);W^1_p(\Omega)),\\
\theta\in L_\infty(0,\infty; W^{2-4/p}_{p/2}(\Omega))\cap W^{1,2}_{p/2}((0,\infty)\times\Omega).
\end{split}
\end{equation}
\end{theorem}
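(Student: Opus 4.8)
The plan is to construct the solution locally by linearization and a Banach fixed-point argument in the maximal-regularity spaces, and then to close uniform-in-time a priori estimates that let the local solution be continued to $[0,\infty)$. The guiding principle is that largeness of $\ut$ plays the role usually reserved for smallness of the data: since the fluid is thermally isolated and the dissipated kinetic energy reappears as heat, the temperature can only increase, its infimum stays at $\ut$, and therefore $\nu(\theta)=\theta^m\ge\ut^m$ is uniformly large. Every nonlinear term in the momentum balance then effectively comes divided by this large viscosity, so that after rescaling time by $\ut^m$ the convective and coefficient-oscillation terms carry an explicit small factor $\ut^{-m}$, which is what lets the estimates be absorbed.

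For the local step I would solve the density by the method of characteristics; since $\div v=0$ the values of $\varrho$ are merely transported, so $\|1-\varrho(t)\|_{L_\infty}=\|1-\varrho_0\|_{L_\infty}<c$ is preserved, while $\|\nabla\varrho(t)\|_{L_p}\le\|\nabla\varrho_0\|_{L_p}\exp\big(\int_0^t\|\nabla v\|_{L_\infty}\,ds\big)$. The velocity is then obtained from the variable-coefficient Stokes system and the temperature from the quasilinear heat equation, both through the maximal $L_p$- resp. $L_{p/2}$-regularity estimates of Section~2; contraction follows because $\|1-\varrho_0\|_\infty$ is small and the time interval is short. The exponents $p>7$ and the $L_{p/2}$-scale for $\theta$ are dictated by the requirement that $|\D(v)|^2\in L_{p/2}$ whenever $\D(v)\in L_p$, and by the Sobolev embeddings $W^{2-2/p}_p\hookrightarrow C^1$ and $W^{1,2}_{p/2}\hookrightarrow L_\infty$ (valid since $p>5$) needed below.

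The heart of the matter is the a priori estimate, which I would organize as follows. (i) \emph{Minimum principle}: the transport--diffusion equation for $\theta$ has nonnegative source $\nu(\theta)|\D(v)|^2$ and Neumann data, hence $\theta\ge\ut$ for all $t$. (ii) \emph{Energy}: testing the momentum equation with $v$ gives $\frac{d}{dt}\int\frac12\varrho|v|^2+\int\nu(\theta)|\D(v)|^2=0$, so that $\int_0^\infty\!\!\int\theta^m|\D(v)|^2\le\int\frac12\varrho_0|v_0|^2=:E_0$ independently of $T$ and of $\ut$, whence $\|\D(v)\|^2_{L_2((0,\infty)\times\Omega)}\le E_0/\ut^m$ is small. (iii) \emph{Velocity}: writing $\div(\nu(\theta)\D(v))=\frac12\nu(\theta)\Delta v+\nabla\nu(\theta)\cdot\D(v)$ and rescaling $s=\ut^m t$, the momentum balance becomes a constant-coefficient Stokes system with right-hand side $\ut^{-m}\big(-\varrho v\cdot\nabla v\big)$ plus terms measuring the oscillation $\nu(\theta)-\ut^m$; maximal regularity then controls $\|v\|_{W^{1,2}_p}$ and $\|v\|_{L_\infty(W^{2-2/p}_p)}$. (iv) \emph{Temperature}: the source is estimated by $\|\theta^m|\D(v)|^2\|_{L_{p/2}}\le\|\theta\|_\infty^m\|\D(v)\|_{L_p}^2$, and the parabolic embedding yields both $\theta\in W^{1,2}_{p/2}$ and an upper bound $\theta\le\ut+M$. (v) \emph{Density}: the exponential bound from the local step globalizes once $\int_0^\infty\|\nabla v\|_{L_\infty}\,dt<\infty$, which follows from the exponential-in-$s$ decay forced by the large viscosity.

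I would close the loop by a continuity argument: assume $\theta\le\ut+M$ on $[0,T]$ with $M$ fixed and independent of $\ut$; then the relative oscillation $(\theta/\ut)^m-1=O(M/\ut)$ is small, so the oscillation terms in (iii) are genuinely small, the bounds (iii)--(iv) follow, and they improve the assumption to $\theta\le\ut+M/2$, closing the bootstrap for $\ut$ large. The main obstacle is precisely this coupled bootstrap: one must simultaneously control the oscillation of the coefficient $\nu(\theta)$ in the momentum equation, so that maximal regularity applies with constants independent of $T$, and the quadratic production $\nu(\theta)|\D(v)|^2$ in the temperature equation, so that the induced heating does not destroy the near-constancy of the viscosity. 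The largeness of $\ut$ is exactly what decouples these two effects, since it renders the production energetically negligible relative to the ambient thermal energy $\sim\ut\,|\Omega|$ while keeping the effective nonlinearity of size $\ut^{-m}$.
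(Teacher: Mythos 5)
Your a priori--estimate scheme is, at its core, the same as the paper's: the minimum principle $\theta\ge\ut$, maximal regularity for Stokes/heat operators with the constant coefficients $\un=\nu(\ut)$, $\uk=\kappa(\ut)$, treatment of $\div((\nu(\theta)-\un)\D(v))$ and the analogous heat terms as perturbations whose relative size $\|\nu(\theta)-\un\|_{L_\infty}/\un\sim\Phi_0/\ut$ is small for $\ut$ large, and a continuity argument to make the bounds global. Your bookkeeping (time rescaling by $\ut^m$, bootstrap on $\theta\le\ut+M$) replaces the paper's decomposition $v=N+S$, $\theta=H+E+\ut$ and the quadratic inequality $\Xi\le A_0K\Xi^2+\Phi_0$, but these are equivalent in substance. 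One point you gloss over and the paper treats explicitly: with Neumann boundary conditions the parabolic estimate controls $\theta$ only up to its spatial average (the source $\nu(\theta)\D^2(v)$ has no reason to have zero mean), so to turn your step (iv) into a pointwise bound $\theta\le\ut+M$ you must control $\int_\Omega(\theta-\ut)\,dx$ separately; this is exactly what Lemma \ref{l:mean} (the total energy balance) supplies, and your step (ii) contains the needed ingredient even though you never draw this conclusion.

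The genuine gap is in the local existence step. A direct Banach fixed-point argument ``in the maximal-regularity spaces'' does not work for this system, precisely because of the hyperbolic continuity equation --- this is the obstacle the paper itself points out at the beginning of Section 4. Concretely: if the map sends $(\bar v,\bar\theta)$ to $(v,\theta)$ after transporting the density along $\bar v$, then the density difference satisfies $\delta\varrho_t+\bar v^1\cdot\nabla\delta\varrho=-\delta\bar v\cdot\nabla\varrho^2$, so $\delta\varrho$ is controlled only in $L_\infty(0,T;L_p)$ (never in $L_\infty$, since $\nabla\varrho$ is merely $L_p$); but the contraction estimate for the momentum equation must bound a term of the form $\|\delta\varrho\, v^2_t\|_{L_p((0,T)\times\Omega)}$, which would require either $\delta\varrho\in L_\infty((0,T)\times\Omega)$ or $v^2_t\in L_p(0,T;L_\infty)$, neither of which is available in the maximal regularity class. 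Shortness of the time interval and smallness of $\|1-\varrho_0\|_{L_\infty}$ do not repair this loss of a derivative. The paper's remedy is a two-tier argument: the iterates are shown to be uniformly bounded in the strong (maximal regularity) norms, while contraction is proved in a weaker $L_2$-type norm --- see \eqref{est-delta-v} and its analogue for $\delta\theta^k$ --- where $\delta\varrho^k\in L_\infty(0,T;L_2)$ is paired against $v^{k-1}_t\in L_2(0,T;L_3)$; the limit then inherits the strong bounds, and uniqueness also follows from the weak-norm contraction. Alternatively one can pass to Lagrangian coordinates (where the density is frozen at $\varrho_0$) to obtain a true contraction, but in the Eulerian variables as you set it up, your fixed-point step fails as stated.
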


The techniques presented here arise from 
 maximal regularity estimates in the standard $L_p$-framework \cite{A1995,LSU1968,MuZa2000}. 
Our purpose, however, is not 
 the most sharp description of the regularity, 
  we rather 
 aim at capturing 
the most general constraints on $\nu(\cdot)$ and $\kappa(\cdot)$.  Our methods are close to the ones for the compressible Navier-Stokes equations \cite{Da2001, Mu2001, Mu2003, MuZa2000, MuZa2002, So90}.
 On the other hand, for the  
results on  large data solutions to the Navier-Stokes equations the reader might refer to \cite{Baetal2013, Mu2001a, Mu2008,  PoRaSiTi94, ZaZa2015} for constant density flows  
 and  to \cite{DaMu2013, PaZhZh2013} for inhomogeneous
models. 

The structure of the paper is the following. In the second section we state   linear problems,  along with various lemmas on regularity of solutions. We complete the section with the estimates for the temperature. All these facts are used in  Section~\ref{S:apriori}, where we provide crucial a priori  estimates that  allow to conclude that existence of regular solutions  is indeed global.  The last section contains the proof of existence of solutions and prescribes asymptotic behaviour of solutions (Theorem \ref{T:asympt}).


\bigskip

\section{Auxiliary  technical lemmas}

Throughout the paper we use the standard notation  \cite{A1995,LSU1968}. By $L_p(Q)$ we denote the Lebesgue space of functions integrable with $p$-th power. 
By $W^{1,2}_p((0,T)\times \Omega)$ we mean the Sobolev space equipped with the following norm
\begin{equation}
 \|u\|_{W^{1,2}_p((0,T)\times \Omega)} = \|u,u_t,\nabla_x^2u\|_{L_p((0,T)\times \Omega)}.
\end{equation}
The homogeneous seminorm related to this space is denoted as follows
\begin{equation}
 \|u\|_{\dot W^{1,2}_p((0,T)\times \Omega)} = \|u_t,\nabla_x^2u\|_{L_p((0,T)\times \Omega)}.
\end{equation}
Further, we simplify the notation $\nabla=\nabla_x$.

The fractional Sobolev-Slobodeckii space $W^{2-2/p}_p(\Omega)$ is defined as the trace space for the 
space $W^{1,2}_p((0,T)\times \Omega)$ taking the truncation of functions for time i.e. $\{t=0\}\times \Omega$.

\smallskip 

Firstly, in the series of lemmas  we collect estimates for linear problems.


\begin{lemma}\label{l:heat}[see e.g. \cite{LSU1968}]
 Let $\Omega$ be bounded,  $\alpha >0$ and  $f\in L_q((0,T)\times\Omega), u_0\in W^{2-2/q}_q(\Omega)$, then a solution to the  problem
 \begin{equation}\label{l1a}
  u_t-\alpha \Delta u =f \mbox{ in } (0,T)\times\Omega, \qquad u|_{t=0}=u_0
 \end{equation}
obeys the following estimate
\begin{equation}\label{l1b}
 \sup_t \|u\|_{W^{2-2/q}_q(\Omega)} + \|u_t,\alpha \nabla^2 u\|_{L_q((0,T)\times\Omega)} \leq C(\|f\|_{L_q((0,T)\times\Omega)}+\|u_0\|_{W^{2-2/q}_q(\Omega)}),
\end{equation}
where $C$ is independent of $T$.
The system \eqref{l1a} is considered with the no-slip condition
\begin{equation}\label{l1c}
 u=0 \mbox{ at }  (0,T)\times\d\Omega
\end{equation}
or 
with the homogeneous Neumann boundary relations
\begin{equation}\label{l1d}
 \frac{\d u}{\d n} =0 \mbox{ at }  (0,T)\times\d\Omega, \mbox{ \ \ in addition \ \ } \int_\Omega f(x,t) dx =0 \mbox{ for } t\in (0,T).
\end{equation}
In addition, if $u_0\equiv 0$, then
\begin{equation}\label{l1e}
 \sup_t \alpha^{1-1/q}\|u\|_{W^{2-2/q}_q(\Omega)} + \|u_t,\alpha \nabla^2 u\|_{L_q((0,T)\times\Omega )} \leq C\|f\|_{L_q((0,T)\times\Omega)}.
\end{equation}
\end{lemma}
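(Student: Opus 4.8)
The plan is to reduce to the normalized case $\alpha=1$ by a parabolic rescaling and then invoke the classical $L_q$ maximal regularity for the heat operator, taking care that all constants stay independent of the length of the time interval. Concretely, I set $\tau=\alpha t$ and $w(x,\tau)=u(x,\tau/\alpha)$, so that $w_\tau-\Delta w=g$ on $(0,\alpha T)\times\Omega$ with $g=f/\alpha$ and $w|_{\tau=0}=u_0$. A direct change of variables gives $\|u_t,\alpha\nabla^2 u\|_{L_q((0,T)\times\Omega)}=\alpha^{1-1/q}\|w_\tau,\nabla^2 w\|_{L_q((0,\alpha T)\times\Omega)}$ and $\|g\|_{L_q((0,\alpha T)\times\Omega)}=\alpha^{1/q-1}\|f\|_{L_q((0,T)\times\Omega)}$, while the $\sup_t$-norm is scaling invariant. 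It is precisely this bookkeeping that produces the weight $\alpha^{1-1/q}$ in \eqref{l1e}: for zero initial data the two powers of $\alpha$ cancel and one is left with the $\alpha$-free right-hand side. For \eqref{l1b} the same computation works, now allowing the constant on the $\|u_0\|_{W^{2-2/q}_q}$ term to depend on $\alpha$, which is permitted since only $T$-independence is claimed there.

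For the normalized problem I split $u=u_1+u_2$, where $u_1(\tau)=e^{\tau\Delta}u_0$ solves the homogeneous equation and $u_2$ solves $\partial_\tau u_2-\Delta u_2=g$ with $u_2|_{\tau=0}=0$. The estimate for $u_2$ is the standard maximal $L_q$-regularity statement: extending $g$ by zero to $(0,\infty)$, it follows from boundedness of the $H^\infty$-calculus (equivalently, bounded imaginary powers, via Dore--Venni) of the Dirichlet, resp.\ Neumann, Laplacian, or directly from the potential-theoretic construction of \cite{LSU1968}. The bound for $u_1$ uses that $W^{2-2/q}_q(\Omega)$ is exactly the real-interpolation trace space $(L_q,D(\Delta))_{1-1/q,q}$, so that $\|\partial_\tau u_1,\nabla^2 u_1\|_{L_q}\lesssim\|u_0\|_{W^{2-2/q}_q}$; the supremum terms in \eqref{l1b} and \eqref{l1e} then come from the embedding of the maximal-regularity space into $C([0,\infty);W^{2-2/q}_q(\Omega))$, which for the $u_2$-part holds with vanishing lower-order contribution because the initial trace is zero.

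The point needing real care, and the reason the estimate can be stated with a constant independent of $T$, is the uniform-in-time control, which I regard as the \emph{main obstacle}. On a bounded $\Omega$ the Dirichlet Laplacian has a spectral gap $\lambda_1>0$, so $\|e^{\tau\Delta}\|_{L_q\to L_q}\le e^{-\lambda_1\tau}$; working on the half-line $(0,\infty)$ and exploiting this exponential decay makes the time-integrals of $u_1$ and of the zero-extended forcing converge uniformly, which is what removes any dependence of $C$ on the interval length. The genuinely delicate case is the Neumann problem, where $\lambda_1=0$ and the constants lie in the kernel of the operator. Here I would use the compatibility condition $\int_\Omega f\,dx=0$: integrating the equation over $\Omega$ and using $\partial_n u=0$ shows that the spatial mean of $u$ is conserved in time, so I decompose $u$ into its mean (constant in $x$, contributing nothing to $\nabla^2 u$ or to $\partial_n u$) and its mean-zero part, on which the Neumann Laplacian again has a positive spectral gap. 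Restricting to this subspace restores the exponential decay and hence the $T$-uniform maximal regularity, and verifying that the compatibility condition is exactly what makes this mean-zero reduction consistent is the principal technical hurdle.
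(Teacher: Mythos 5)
Your proposal is correct and takes essentially the same route as the paper: the paper offers no proof of this lemma beyond citing \cite{LSU1968} for the basic maximal-regularity estimate and the remark that \eqref{l1e} is ``a simple consequence of rescaling the considered systems in time,'' which is precisely your change of variables $\tau=\alpha t$ producing the weight $\alpha^{1-1/q}$. Your extra material (interpolation/Dore--Venni for the normalized problem, the Dirichlet spectral gap, and the mean-zero reduction under $\int_\Omega f\,dx=0$ in the Neumann case) merely supplies the classical background the paper delegates to its references, and your decision to let the constant on the $\|u_0\|$-term depend on $\alpha$ is the correct reading of the statement, which claims independence of $T$ only.
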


\begin{lemma}\label{l:stokes}[see e.g.~\cite{La69, So64}]
 Let $\Omega$ be bounded, $\un>0, F\in L_p((0,T)\times \Omega )$, $v_0\in W^{2-2/p}_p(\Omega)$, then a solution to the Stokes system
 \begin{equation}\label{stokes}
 \begin{array}{lr}
  v_t - \un \Delta v + \nabla \pi = F & \mbox{in \ \ } (0,T)\times\Omega,\\
\div v=0 & \mbox{in \ \ } (0,T)\times\Omega,
 \end{array}
\end{equation}
with initial datum $v|_{t=0}=v_0$ and $v=0$ at the boundary satisfies the following bound
\begin{equation}\label{stokes1}
 \sup_t \|v\|_{W^{2-2/p}_p(\Omega)} + \|v_t,\un \nabla^2 v \|_{L_p((0,T)\times\Omega)} \leq C(\|F\|_{L_p((0,T)\times \Omega )}+\|v_0\|_{W^{2-2/p}_p(\Omega)}).
\end{equation}
In addition if $F\equiv 0$, then for a.a. $t\in (0,T)$
 \begin{equation}\label{l2a}
  \|v(t)\|_{W^{2-2/p}_p(\Omega)} \leq Ce^{- c \un t} \|v_0\|_{W^{2-2/p}_p(\Omega)}.
 \end{equation}
In the case $v_0\equiv 0$, we obtain
\begin{equation}\label{l2b}
\sup_t \un^{1-\frac{1}{p}} \|v\|_{W^{2-2/p}_p(\Omega)} + \|v_t,\un \nabla^2 v\|_{L_p((0,T)\times\Omega)}\le C\|F\|_{L_p((0,T)\times\Omega)}.
\end{equation}
\end{lemma}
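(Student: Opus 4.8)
\emph{Proof (sketch).} The plan is to reduce the Stokes system to an abstract parabolic equation for the solenoidal part of $v$, to quote the classical maximal $L_p$-regularity and analyticity of the Stokes semigroup on a bounded domain as the baseline (unit-viscosity) input, and then to obtain the three asserted bounds---the basic estimate \eqref{stokes1}, the decay \eqref{l2a}, and the viscosity-scaling estimate \eqref{l2b}---by, respectively, superposition, the spectral gap of the Stokes operator, and a parabolic time dilation. The whole argument runs parallel to Lemma \ref{l:heat}, with the Laplacian replaced by the Stokes operator and the pressure recovered a posteriori.

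First I would apply the Helmholtz--Leray projection $\mathbb P$ onto solenoidal fields. Writing $A=-\mathbb P\Delta$ for the Stokes operator with domain $W^2_p\cap W^1_{p,0}\cap\{\div=0\}$, the system \eqref{stokes} becomes $v_t+\un Av=\mathbb P F$, $v|_{t=0}=v_0$, while the pressure is reconstructed from $\nabla\pi=(I-\mathbb P)(F+\un\Delta v)$, which is solvable on the bounded $\Omega$. The baseline fact I would assume is the classical one (\cite{La69,So64}): for unit viscosity, $A$ generates a bounded analytic semigroup on the solenoidal $L_p$ space and enjoys maximal $L_p$-regularity, so that the solution of $w_s+Aw=G$, $w|_{s=0}=w_0$ satisfies $\|w_s,\nabla^2w\|_{L_p}+\sup_s\|w\|_{W^{2-2/p}_p}\le C(\|G\|_{L_p}+\|w_0\|_{W^{2-2/p}_p})$, with $C$ depending only on $p$ and $\Omega$.

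The scaling estimate \eqref{l2b} then follows, for $v_0\equiv0$, from the baseline by the time dilation $s=\un t$: setting $w(s)=v(s/\un)$ and $G=\mathbb P F/\un$ turns $v_t+\un Av=\mathbb P F$ into $w_s+Aw=G$, and tracking the Jacobian $ds=\un\,dt$ gives $\|w_s\|_{L_p}=\un^{1/p-1}\|v_t\|_{L_p}$, $\|\nabla^2w\|_{L_p}=\un^{1/p}\|\nabla^2v\|_{L_p}$ and $\|G\|_{L_p}\le\un^{1/p-1}\|F\|_{L_p}$, while the spatial norm $\sup_s\|w\|_{W^{2-2/p}_p}=\sup_t\|v\|_{W^{2-2/p}_p}$ is unchanged; multiplying the resulting inequality by $\un^{1-1/p}$ reproduces \eqref{l2b} exactly. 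The same dilation applied to the homogeneous problem, together with the superposition $v=e^{-\un At}v_0+(\text{forced part})$, yields \eqref{stokes1}. The decay \eqref{l2a} comes from analyticity plus the strict positivity of $A$: on a bounded domain Poincar\'e's inequality forces a spectral gap $\lambda_1>0$, hence $\|e^{-\un At}\|_{\mathcal L(W^{2-2/p}_p)}\le Ce^{-c\un t}$ with $c\simeq\lambda_1$, which is precisely \eqref{l2a}; this exponential decay is also what renders all constants independent of the horizon $T$, since finite-interval bounds can be summed over the half-line.

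The essential obstacle is the baseline itself: the maximal $L_p$-regularity of the Stokes operator on a bounded domain, with a constant independent of the time interval. This is the deep classical ingredient---obtained through analyticity of the semigroup together with bounded imaginary powers or $\mathcal R$-boundedness of the resolvent (Dore--Venni or Weis theory), or equivalently through Solonnikov's potential-theoretic estimates---and it is exactly the point one imports from \cite{La69,So64}. Once it is granted, the $\un$-dependence in \eqref{l2b}, the superposition giving \eqref{stokes1}, and the spectral-gap decay \eqref{l2a} are all routine, and the uniformity in $T$ follows from the exponential stability of the semigroup. \endproof
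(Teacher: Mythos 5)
Your proposal is correct and takes essentially the same route as the paper, which offers no proof of this lemma beyond citing \cite{La69, So64} for \eqref{stokes1} and \eqref{l2a} and remarking that \eqref{l2b} is ``a simple consequence of rescaling the considered systems in time'' --- exactly your time dilation $s=\un t$, whose Jacobian bookkeeping ($\|w_s\|_{L_p}=\un^{1/p-1}\|v_t\|_{L_p}$, $\|\nabla^2 w\|_{L_p}=\un^{1/p}\|\nabla^2 v\|_{L_p}$) you carry out correctly. One caveat worth recording: in your superposition step the homogeneous part only satisfies $\|v_t,\un\nabla^2 v\|_{L_p((0,T)\times\Omega)}\le C\,\un^{1-1/p}\|v_0\|_{W^{2-2/p}_p(\Omega)}$ (the same dilation, run in reverse), so the constant your argument produces in \eqref{stokes1} is independent of $T$ but not of $\un$; this proves the lemma as literally stated, where only \eqref{l2a} and \eqref{l2b} display the $\un$-dependence explicitly, but the bound should not be quoted as $\un$-uniform.
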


\noindent
Estimates (\ref{l1e}) and (\ref{l2b}) are a simple consequence of rescaling the considered systems in time.


\smallskip 

The next observation concerns the behavior of the temperature. By the maximum principle we are able to 
control the lower bound of this quantity. This fact is crucial within our all considerations.

\begin{lemma}\label{est-theta}
 Let $\varrho, \theta,v$ be sufficiently smooth solutions to  system (\ref{main}), then ($\ut$ is given by (\ref{i4}))
\begin{equation}\label{i5}
 \theta(t,x) \geq \ut \mbox{ \ for  a.a. } (t,x) \in (0,T)\times\Omega.
\end{equation}
\end{lemma}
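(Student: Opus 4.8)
The plan is to run a minimum principle via an energy estimate on the truncation of $\theta$ below the level $\ut$. Set $\phi := (\ut - \theta)_+ \geq 0$ and aim to show $\phi \equiv 0$, which is precisely \eqref{i5}. The two structural facts driving the argument are that the right-hand side $\nu(\theta)\D^2(v)$ is a (nonnegative) heat-production term and that the diffusion satisfies a homogeneous Neumann condition, so both dissipative mechanisms act in the favorable direction for a lower bound.

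First I would recast the convective part in conservative form. Using the continuity equation $\varrho_t + v\cdot\nabla\varrho = 0$ together with $\div v = 0$, one has for any smooth $g$ the identity $\varrho(g_t + v\cdot\nabla g) = \d_t(\varrho g) + \div(\varrho g\, v)$. Applying this with $g = \tfrac12 \phi^2$ and noting that on the set $\{\phi>0\}$ one has $\phi = \ut - \theta$, hence $\phi_t = -\theta_t$ and $\nabla\phi = -\nabla\theta$, I would multiply the temperature equation in \eqref{main} by $\phi$ and integrate over $\Omega$. The material-derivative term then produces $\tfrac{d}{dt}\int_\Omega \tfrac12 \varrho\phi^2\,dx$, since the divergence contribution integrates to a boundary term $\int_{\d\Omega}\varrho\, g\,(v\cdot n)$ that vanishes because $v=0$ on $\d\Omega$.

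Next I would treat the two remaining terms. Integrating the diffusion term by parts gives $-\int_\Omega \phi\,\div(\kappa(\theta)\nabla\theta)\,dx = \int_\Omega \kappa(\theta)\nabla\phi\cdot\nabla\theta\,dx$, where the boundary contribution disappears thanks to $\tfrac{\d}{\d n}\theta = 0$ from \eqref{i2}; since $\nabla\phi\cdot\nabla\theta = -|\nabla\phi|^2$ on $\{\phi>0\}$ and $\kappa(\theta) = (1+\theta)^l \geq 0$, this term equals $-\int_\Omega \kappa(\theta)|\nabla\phi|^2\,dx \leq 0$. The source term contributes $-\int_\Omega \phi\,\nu(\theta)\D^2(v)\,dx \leq 0$, because $\phi \geq 0$, the dissipation $\D^2(v)\geq 0$, and $\nu(\theta)=\theta^m \geq 0$. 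Collecting these, I obtain the differential inequality $\tfrac{d}{dt}\int_\Omega \tfrac12 \varrho\phi^2\,dx \leq 0$. Since $\ut = \inf\theta_0$ forces $\phi|_{t=0} = (\ut - \theta_0)_+ = 0$, integration in time yields $\int_\Omega \varrho\,\phi^2(t)\,dx \leq 0$, and because $\varrho$ is transported from the strictly positive datum \eqref{ass1} and hence stays positive, this gives $\phi(t,\cdot) \equiv 0$, i.e. $\theta \geq \ut$.

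The step I expect to be most delicate is securing the sign of the source term, since on the very set $\{\theta < \ut\}$ that the argument probes, one must know $\nu(\theta)=\theta^m$ is nonnegative (and indeed well defined). In the regime of interest $\ut$ is large and positive, so the conclusion $\theta \geq \ut > 0$ is self-consistent with $\theta^m \geq 0$; to make this rigorous for ``sufficiently smooth solutions'' I would either interpret $\nu$ through its physical (nonnegative) extension or, if preferred, first establish the weaker bound $\theta \geq 0$ by the identical truncation argument at level $0$ — on $\{\theta<0\}$ the production term may simply be discarded once one checks it does not spoil the sign — and then bootstrap to the sharp bound $\theta \geq \ut$. The remaining manipulations, namely the conservative rewriting and the two integrations by parts, are routine given Lemma~\ref{l:heat} and the boundary conditions \eqref{i2}.
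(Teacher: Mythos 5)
Your proof is correct and is essentially the paper's own argument: the paper multiplies the temperature equation by $(\theta-\ut)_-=-(\ut-\theta)_+$, i.e.\ by $-\phi$ in your notation, and derives the same differential inequality $\frac{d}{dt}\int_\Omega\varrho\,(\theta-\ut)_-^2\,dx\le 0$ using the continuity equation with $\div v=0$ and $v|_{\d\Omega}=0$ for the convective term, the Neumann condition for the diffusion term, and the nonnegativity of $\nu(\theta)\D^2(v)$, concluding from the vanishing initial truncation and $\varrho>0$. One bookkeeping point: multiplying the $\theta$-equation by $+\phi$ makes the material-derivative contribution $-\frac{d}{dt}\int_\Omega\frac12\varrho\phi^2\,dx$ (not $+$), which is precisely why the two nonpositive terms you identify yield the inequality in the stated direction.
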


\begin{proof}
For given scalar function $u$ we define:  $(u)_-:=\min\{u,0\}$.   Multiplying \eqref{main}$_3$ with $(\theta-\ut)_-$,
 integrating over $\Omega$ we obtain
\begin{equation}\label{negative}
\int_{\Omega}\left(\frac{1}{2}\varrho\,[(\theta-\ut)_-^2]_t +\frac{1}{2}\varrho v\cdot\nabla [(\theta-\ut)_-^2]+\kappa(\theta)[\nabla(\theta-\ut)_-]^2-\nu(\theta)\D^2(v)(\theta-\ut)_-
 \right) dx=0.
\end{equation}
Using equation \eqref{main}$_1$ we have
$$
\frac{1}{2}\int_\Omega \varrho v\cdot\nabla [(\theta-\ut)_-^2] dx=-\frac{1}{2}\int_\Omega v\cdot \nabla \varrho 
 (\theta-\ut)_-^2 dx=\frac{1}{2}\int_\Omega \varrho_t  (\theta-\ut)_-^2 dx
$$
and since the last two terms in \eqref{negative} are nonnegative, thus
$$
\frac{d}{dt}\int_\Omega\varrho(\theta-\ut)_-^2 dx\le 0.
$$
Hence \eqref{i5} holds. 
\end{proof}

The next result is a modification of the standard energy law. It allows to control the average of  the quantity $\theta -\ut$ without dependence on $\ut$.

\begin{lemma}\label{l:mean}
 Let $\varrho, \theta, v$ be sufficiently smooth solutions to (\ref{main}), then
 \begin{equation}\label{i10}
  \int_\Omega (\frac 12 \varrho |v|^2 + \varrho (\theta -\ut))(t) dx =
  \int_\Omega (\frac 12 \varrho_0 |v_0|^2 + \varrho_0 (\theta_0 -\ut)) dx,
 \end{equation}
furthermore
\begin{equation}\label{i11}
 \int_\Omega (\theta -\ut) (t) dx \leq C
 \int_\Omega (\frac 12 \varrho_0 |v_0|^2 + \varrho_0 (\theta_0 -\ut)) dx.
\end{equation}
\end{lemma}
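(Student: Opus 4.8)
The plan is to derive both relations from the total energy balance recalled in the Introduction together with the conservation of mass \eqref{mass}, exploiting the fact that $\ut$ is a constant. For \eqref{i10} I would simply shift the energy identity: subtracting $\ut\int_\Omega\varrho(t)\,dx$ from its left-hand side and $\ut\int_\Omega\varrho_0\,dx$ from its right-hand side, these two quantities coincide by \eqref{mass}, so the equality is preserved, and regrouping $\varrho\theta-\ut\varrho=\varrho(\theta-\ut)$ yields exactly \eqref{i10}. Equivalently, one may re-derive \eqref{i10} directly: testing \eqref{main}$_2$ with $v$ gives $\frac{d}{dt}\int_\Omega\frac12\varrho|v|^2+\int_\Omega\nu(\theta)\D^2(v)=0$ (the pressure term drops by $\div v=0$ and the boundary term of the viscous part by $v=0$), while integrating \eqref{main}$_4$ over $\Omega$ and using the Neumann condition \eqref{i2} together with \eqref{mass} gives $\frac{d}{dt}\int_\Omega\varrho(\theta-\ut)=\int_\Omega\nu(\theta)\D^2(v)$; adding the two makes the dissipation and the heating cancel, and integrating in time produces \eqref{i10}.

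For \eqref{i11} I would start from \eqref{i10} and discard the nonnegative kinetic term $\frac12\varrho|v|^2\ge0$ on the left, which leaves
\begin{equation*}
\int_\Omega\varrho(\theta-\ut)(t)\,dx\le\int_\Omega\Big(\tfrac12\varrho_0|v_0|^2+\varrho_0(\theta_0-\ut)\Big)\,dx.
\end{equation*}
It then remains to pass from the weighted integrand $\varrho(\theta-\ut)$ to $(\theta-\ut)$ on the left. Here I would invoke two ingredients: the maximum principle of Lemma \ref{est-theta}, which guarantees $\theta-\ut\ge0$ a.e. by \eqref{i5}, and a pointwise lower bound on the density. The latter follows from the transport structure of \eqref{main}$_1$: since $\div v=0$, the continuity equation preserves the $L_\infty$ bounds of $\varrho$ along characteristics, hence $\|1-\varrho(t)\|_{L_\infty}=\|1-\varrho_0\|_{L_\infty}<c$ by \eqref{ass1}, so $\varrho(t)\ge1-c>0$. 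Combining these gives $(\theta-\ut)\le(1-c)^{-1}\varrho(\theta-\ut)$ pointwise, and integration yields \eqref{i11} with $C=(1-c)^{-1}$.

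The computation is elementary, so there is no genuine analytic obstacle; the only points requiring care are the two sign facts used in the reduction, namely the lower bound $\varrho\ge1-c$ coming from the transport of the density and the nonnegativity $\theta-\ut\ge0$ from Lemma \ref{est-theta}. The feature that makes \eqref{i11} valuable in the a priori estimates of Section~\ref{S:apriori} is that the resulting constant $C$ depends only on $c$ and \emph{not} on $\ut$, so the right-hand side stays controlled uniformly as $\ut$ is taken large; I would therefore track this independence explicitly.
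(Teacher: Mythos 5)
Your proposal is correct and follows essentially the same route as the paper: identity \eqref{i10} is obtained by adding the kinetic-energy balance (testing \eqref{main}$_2$ with $v$) to the integrated heat equation, and \eqref{i11} follows by dropping the kinetic term and using the $L_\infty$-preservation of $\varrho-1$ under transport. You are in fact more explicit than the paper about the two pointwise facts needed in the last step, namely $\theta-\ut\ge 0$ from Lemma \ref{est-theta} and $\varrho\ge 1-c>0$ from \eqref{ass1}, which the paper's phrase ``we conclude \eqref{i11} directly from \eqref{i10}'' leaves implicit.
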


\begin{proof}
 Testing the momentum equation by $v$ and using the continuity equation we obtain
 \begin{equation}\label{i12}
  \frac{d}{dt} \int_\Omega\frac 12 \varrho |v|^2 dx + \int_\Omega \nu(\theta)\D^2(v) dx=0.
 \end{equation}
Next, integrating the heat equation with $\theta-\ut$, we get
\begin{equation}\label{i13}
 \frac{d}{dt} \int_\Omega \varrho(\theta -\ut) (t)dx =\int_\Omega \nu(\theta)\D^2(v) dx.
\end{equation}
Adding (\ref{i12}) and (\ref{i13}) we obtain (\ref{i10}).
Relation (\ref{i12}) provides  also that 
\begin{equation}\label{i14}
 \int_\Omega \frac{1}{2} \varrho(t) |v(t)|^2  dx \leq \int_\Omega  \frac{1}{2}\varrho_0 |v_0|^2 dx.
\end{equation}
Since the classical maximum principle for the continuity equation implies 
\begin{equation}\label{i15}
 \|\varrho(t) -1\|_{L_\infty} = \|\varrho_0-1\|_{L_\infty},
\end{equation} 
we conclude (\ref{i11}) directly from (\ref{i10}). We shall underline that information carried by (\ref{i11}) is important
as it  allows to control the whole norm of the temperature. We do not have direct
control on the average of the temperature. The explanation one can find in Theorem \ref{T:asympt}.
 
\end{proof}

\section{The a priori estimates}
\label{S:apriori}

Since we work in the framework of regular solutions, the kernel of our studies are a priori estimates. The construction/existence of solutions
is shown in Section 4.
 To use the methods of maximal regularity we   restate the system as follows
\begin{equation}\label{a1}
 \begin{array}{lr}
 \varrho_t+v\cdot\nabla \varrho=0,\\ 
  v_t - \un \Delta v + \nabla \pi =(1-\varrho)v_t-\varrho v\cdot \nabla v - \div ((\un - \nu(\theta))\D(v)) 
& \mbox{in \ \ } (0,T)\times\Omega,\\
\div v=0 & \mbox{in \ \ } (0,T)\times\Omega,\\
\theta_t  - \uk \Delta \theta =(1-\varrho)\theta_t+ \nu(\theta) \D^2(v) -\varrho v\cdot \nabla \theta - \div((\uk -\kappa(\theta)) \nabla \theta) & \mbox{in \ \ } (0,T)\times\Omega,
 \end{array}
\end{equation}
 where $\un=\nu(\ut)$ and $\uk=\nu(\ut)$. 

  In the first step we introduce the extension of the initial data. 
 To find suitable relations of  solutions in terms of $\ut$, $\uk$, $\un$ we  divide the sought functions into two parts 
\begin{equation}\label{a2}
 v= N + S,
\end{equation}
where $S$ is a solution to the Stokes system
\begin{equation}\label{a3}
 \begin{array}{lr}
  S_t - \un \Delta S + \nabla \pi = 0 & \mbox{in \ \ } (0,T)\times\Omega,\\
\div S=0 & \mbox{in \ \ } (0,T)\times\Omega,
 \end{array}
\end{equation}
with initial datum $S|_{t=0}=v_0$ and $S=0$ at the boundary.
By Lemma \ref{l:stokes} the solution satisfies
\begin{equation}\label{a4}
 \sup_t \|S\|_{W^{2-2/p}_p(\Omega)} + \|S_t,\un \nabla^2 S \|_{L_p((0,T)\times\Omega)} \leq C\|v_0\|_{W^{2-2/p}_p(\Omega)}.
\end{equation}
Thus in further considerations we treat  $S$ as a given vector field.
The remaining part  of the velocity field fulfills the system
\begin{equation}\label{a5}
\begin{array}{ll} 
N_t-\un\Delta N+\nabla \pi= & (1-\varrho)(N+S)_t\\
& -\varrho(N+S)\nabla (N+S)-\div((\un-\nu(\theta))\nabla(N+S)),\\[5pt]
\div N=0,\\[5pt]
N|_{t=0}=0.
\end{array}
\end{equation}
System (\ref{a5}) is considered with  zero Dirichlet conditions.
For the later use of  Lemma \ref{l:stokes},  case (\ref{l2b}), to estimate solutions, 
 we introduce the following quantity
\begin{equation}\label{a6}
 \Xi_{\un}^p(N):= \sup_t \un^{1-\frac{1}{p}} \|N\|_{W^{2-2/p}_p(\Omega)} + \|N_t,\un \nabla^2 N\|_{L_p((0,T)\times\Omega)},
\end{equation}
underlining the dependence on $\un$. The zero initial data for $N$ is crucial in proper definition of the above quantity.

\smallskip 

The same we perform for the temperature.
Let 
\begin{equation}\label{a7}
\theta=H+E+\ut,
\end{equation}
where $E$ solves the linear heat equation
\begin{equation}\begin{split} 
E_t-\uk\Delta E&=0,\\
 E|_{t=0}&=\theta_0-\ut
\end{split}\end{equation}
with the homogeneous Neumann boundary condition and by Lemma \ref{l:heat},  case (\ref{l1e})
\begin{equation}
\sup_t \|E\|_{W^{2-4/p}_{p/2}(\Omega)} + \|E_t,\uk \nabla^2 E \|_{L_{p/2}((0,T)\times\Omega)} \leq C\|\theta_0-\ut\|_{W^{2-4/p}_{p/2}(\Omega)}.
\end{equation}
Note that having  the boundary condition $\frac{\partial E}{\partial n}=0$ we control
the average of $E$ in terms of initial data
\begin{equation}
 \int_\Omega E(t) dx = \int_\Omega |E(t)| dx = \int_\Omega (\theta_0-\ut)dx.
\end{equation}

The remaining part is the solution   to the following problem
\begin{equation}\label{a9}
\begin{split}
H_t-\uk \Delta H&= (1-\varrho)(H+E)_t+\nu(\theta)\D^2(N+S)\\
&-\varrho(N+S)\nabla(H+E)-\div((\uk-\kappa(\theta))\D (H+E))=:F_1. \\
H|_{t=0}=0&
\end{split}
\end{equation}
again with the homogeneous Neumann boundary condition. 
Accordingly, keeping in mind Lemma \ref{l:heat}, case (\ref{l1e}),  we introduce
\begin{equation}\label{a10}
 \Xi_{\uk}^{p/2}(H):= 
\sup_t \uk^{1-\frac{2}{p}} \|H\|_{\dot W^{2-4/p}_{p/2}(\Omega)} 
+ \|(H-\{H\})_t,\uk \nabla^2 H\|_{L_{p/2}((0,T)\times\Omega)},
\end{equation}
where for given $w:(0,T)\times \Omega \to \R$ we use the notation
\begin{equation}
 \{w\}=\frac{1}{|\Omega|} \int_\Omega w(t) dx.
\end{equation}
We shall note that in the expression $ \Xi_{\uk}^{p/2}(H)$ we use the homogeneous norm $\dot W^{2-4/p}_{p/2}(\Omega)$.
This is a consequence of the fact that the space average of the right-hand side of  (\ref{a9})$_1$ is not necessarily equal to zero. For this reason we modify a solution up to a spatially homogeneous function.  Indeed, we consider a projection of 
(\ref{a9}) on the space with zero  average  in the $x$-space
\begin{equation}\label{a9-mean}
\begin{split}
(H-\{H\})_t-\uk \Delta (H-\{H\})&= F_1-\{F_1\},\\
H|_{t=0}&=0,
\end{split}
\end{equation}
to obtain that 
\begin{multline}\label{a10a}
 \Xi_{\uk}^{p/2}(H)= \Xi_{\uk}^{p/2}(H-\{H\})= 
 \\[5pt]
\sup_t \uk^{1-\frac{2}{p}} \|H-\{H\}\|_{\dot W^{2-4/p}_{p/2}(\Omega)} 
+ \|(H-\{H\})_t,\uk \nabla^2 H\|_{L_{p/2}((0,T)\times\Omega)}. 
\end{multline}
In order to recover the full norm we use Lemma \ref{l:mean} which controls the average of $\theta$.
We have
\begin{equation}\label{a10b}
 |\int_\Omega H(t)dx  | \leq \int_\Omega (\theta(t) -\ut - E(t)) dx \leq C\int_\Omega (\varrho_0 (\theta_0-\ut) +
 \frac 12 \rho_0 |v_0|^2)dx.
\end{equation}

Observe that the regularity of solutions to equations for velocity is set in the $L_p$-spaces whereas for the heat equation in the $L_{p/2}$-spaces. This naturally follows from the nonlinearity of the right-hand side  of $(\ref{main})_4$.
 Our  goal is not  the optimization of $p$, but we  concentrate our needs to have no restrictions on the magnitude of the initial configuration.

\smallskip 

Below we prove the  inequality,  which will play an essential role in obtaining  the a priori bounds.

\begin{proposition}\label{p:apr}
Let $\Xi^p_{\un}(N)$ and  $\Xi^{p/2}_{\uk}(H)$ be defined by \eqref{a6} and \eqref{a10} respectively. Then  there exists $\Phi_0\ge 0$ depending only on the initial data such that 
\begin{equation}\label{n0}
 \Xi_{\un}^p(N) + \Xi_{\uk}^{p/2}(H) \leq A_0K (\Xi_{\un}^p(N) + \Xi_{\uk}^{p/2}(H) )^2  + \Phi_0,
\end{equation}
for some constant $A_0\ge 0$ and  
\begin{equation}\label{n0a}
K= \max\left\{ \un^{-2+1/p}, 
\frac{\|\nu'(\theta)\|_{L_\infty((0,T)\times\Omega)}}{\un \,\uk^{1-1/p}},
\frac{\|\nu(\theta)\|_{L_\infty((0,T)\times\Omega)}}{\un^2},\un^{-1+1/p}\uk^{-1}, \frac{\|\kappa'(\theta)\|_{L_\infty((0,T)\times\Omega)}}{
\uk^{2-4/p} } \right\}
\end{equation}
is sufficiently small comparing to the size of initial data $(v_0,\theta_0-\ut)$, provided
\begin{equation}\label{n0b}
 \frac{\|\nu(\theta) - \un\|_{L_\infty((0,T)\times\Omega)}}{\un},  \frac{\|\kappa(\theta) - \uk\|_{L_\infty((0,T)\times\Omega)}}{\uk} \leq \frac{1}{40 C_0}, 
\end{equation}
where $C_0$ is the constant from Lemmas \ref{l:heat} and \ref{l:stokes}.

\end{proposition}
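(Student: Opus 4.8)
The plan is to estimate the two quantities $\Xi^p_{\un}(N)$ and $\Xi^{p/2}_{\uk}(H)$ separately, applying the maximal regularity bounds from Lemma~\ref{l:stokes} (case \eqref{l2b}) to system \eqref{a5} and from Lemma~\ref{l:heat} (case \eqref{l1e}) to the zero-average projection \eqref{a9-mean}, and then to add the resulting inequalities. First I would treat the velocity part: applying \eqref{l2b} to \eqref{a5} gives $\Xi^p_{\un}(N) \leq C_0 \un^{-1}\|\text{RHS of }\eqref{a5}\|_{L_p}$, so the whole task reduces to bounding the $L_p$-norm of the four right-hand side terms. The term $(1-\varrho)(N+S)_t$ is controlled using the smallness \eqref{ass1} of $\|1-\varrho\|_{L_\infty}$; the convective term $\varrho(N+S)\nabla(N+S)$ is genuinely quadratic and I would split it as products of $N$ with $N$, $N$ with $S$, and $S$ with $S$, using Sobolev embeddings valid since $p>7$ to place the lower-order factor in $L_\infty$ and absorb powers of $\un$ according to the definition \eqref{a6}; the term $\div((\un-\nu(\theta))\D(N+S))$ is handled by the smallness assumption \eqref{n0b} on $\|\nu(\theta)-\un\|_{L_\infty}/\un$ together with the $\kappa'$- and $\nu'$-type quotients in $K$ arising from distributing the divergence onto $\nabla\nu(\theta)$. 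The role of \eqref{n0b} is exactly to make the constant multiplying $\Xi^p_{\un}(N)$ on the right strictly less than one, so it can be absorbed into the left-hand side; the leftover quadratic contributions feed into the $A_0K(\cdots)^2$ term, and the $S$-only pieces, bounded by $\|v_0\|_{W^{2-2/p}_p}$ through \eqref{a4}, collect into $\Phi_0$.

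Next I would repeat this for the temperature, applying \eqref{l1e} to \eqref{a9-mean} to get $\Xi^{p/2}_{\uk}(H)\leq C_0\uk^{-1}\|F_1-\{F_1\}\|_{L_{p/2}}$. The dominant new feature is the dissipation term $\nu(\theta)\D^2(N+S)$, which is \emph{quadratic in the velocity gradient} and produces a factor $\|\nu(\theta)\|_{L_\infty}/\un^2$ (one of the entries of $K$): I would bound $\|\D^2(N+S)\|_{L_{p/2}}$ by $\|\nabla(N+S)\|_{L_p}^2$ and then convert $\nabla N$ and $\nabla S$ into the norms appearing in $\Xi^p_{\un}(N)$ and \eqref{a4}, again exploiting $p>7$ for the embedding $W^{2-2/p}_p\hookrightarrow W^1_\infty$. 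The transport term $\varrho(N+S)\nabla(H+E)$ and the flux term $\div((\uk-\kappa(\theta))\D(H+E))$ are treated as in the velocity case, the latter using \eqref{n0b} for the diffusive-absorption part and the $\kappa'$-quotient in $K$ for the part where the divergence hits $\kappa(\theta)$. The coupling $\varrho(N+S)\nabla(H+E)$ and the dissipation term are precisely what generate the \emph{mixed} quadratic terms $\Xi^p_{\un}(N)\cdot\Xi^{p/2}_{\uk}(H)$ on the right of \eqref{n0}; since $2ab\le (a+b)^2$, all such mixed and pure-square contributions are majorized by the single term $(\Xi^p_{\un}(N)+\Xi^{p/2}_{\uk}(H))^2$ with a common constant $A_0$.

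The delicate bookkeeping — and the main obstacle — is to track how each power of $\un$, $\uk$, and each norm $\|\nu(\theta)\|_{L_\infty}$, $\|\nu'(\theta)\|_{L_\infty}$, $\|\kappa'(\theta)\|_{L_\infty}$ enters, and to check that every quadratic coefficient is dominated by a constant times one of the five quotients defining $K$ in \eqref{n0a}. The normalizing weights $\un^{1-1/p}$ and $\uk^{1-2/p}$ built into the definitions \eqref{a6} and \eqref{a10} are what make this matching work: when I convert an $L_\infty$-in-space bound on $\nabla N$ (respectively $H$) into the $\Xi$-quantities, the supremum terms carry exactly these weights, so the surplus powers of $\un$ and $\uk$ combine with the prefactors $C_0\un^{-1}$ and $C_0\uk^{-1}$ to yield the precise quotients listed in $K$. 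I expect the assumption $p>7$ to be used repeatedly to guarantee $W^{2-2/p}_p\hookrightarrow W^1_\infty$ in three dimensions and the analogous $W^{2-4/p}_{p/2}$ embeddings for the temperature; the remaining effort is to absorb the diffusive terms controlled by \eqref{n0b} into the left-hand side with room to spare (the factor $1/40$ leaving slack for the five distinct estimates), and to collect the purely-$S$-and-$E$ contributions, together with the average bound \eqref{a10b} from Lemma~\ref{l:mean}, into the data-dependent constant $\Phi_0$.
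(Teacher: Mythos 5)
Your overall strategy coincides with the paper's: treat $S$ and $E$ as data, apply the $\un$-, $\uk$-uniform maximal regularity estimates to the systems for $N$ and for the zero-average projection of $H$, estimate the right-hand sides term by term, absorb the $(\nu(\theta)-\un)$- and $(\kappa(\theta)-\uk)$-contributions through \eqref{n0b}, and sort the remaining terms into $A_0K(\cdots)^2$ and $\Phi_0$. However, there is a concrete error in the quantitative mechanism on which your whole bookkeeping rests: estimates \eqref{l2b} and \eqref{l1e} read
\begin{equation*}
\Xi^p_{\un}(N)\le C_0\,\|F\|_{L_p((0,T)\times\Omega)},\qquad
\Xi^{p/2}_{\uk}(H)\le C_0\,\|F_1-\{F_1\}\|_{L_{p/2}((0,T)\times\Omega)},
\end{equation*}
with $C_0$ independent of $\un$ and $\uk$; there are \emph{no} prefactors $\un^{-1}$, $\uk^{-1}$ as you claim. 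The time rescaling $\tau=\un t$ that proves \eqref{l2b} moves the viscosity entirely into the weights $\un^{1-1/p}$ and $\un\nabla^2$ which are already built into the definition \eqref{a6}; it does not produce an extra negative power on the right-hand side. Consequently your assertion that ``the surplus powers of $\un$ and $\uk$ combine with the prefactors $C_0\un^{-1}$ and $C_0\uk^{-1}$ to yield the precise quotients listed in $K$'' is wrong: carried out literally, your scheme would attach, e.g., a coefficient $\un^{-3+1/p}$ to the $\varrho N\cdot\nabla N$ term and $\uk^{-1}\|\nu(\theta)\|_{L_\infty}\un^{-2}$ to the dissipation term, i.e.\ you would be deriving \eqref{n0}--\eqref{n0a} from linear estimates that are false in that strengthened form, so the argument as written is not a proof.

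The repair is to delete the spurious prefactors and to generate \emph{every} power in $K$ from the nonlinear terms alone, which is exactly what the paper does: the weighted supremum in \eqref{a6} gives $\|N\|_{L_\infty(0,T;L_p)}\le C\un^{-1+1/p}\,\Xi^p_{\un}(N)$, while the embedding $W^{1,2}_p((0,T)\times\Omega)\subset L_p(0,T;W^1_\infty(\Omega))$ (this is where $p>7$ enters) together with the weight on $\un\nabla^2 N$ gives $\|\nabla N\|_{L_p(0,T;L_\infty)}\le C\un^{-1}\,\Xi^p_{\un}(N)$; multiplying these two bounds yields precisely the entry $\un^{-2+1/p}$ of $K$, as in \eqref{n2}, and the analogous weighted conversions of $\nabla H$, $\nabla E$, $\nabla S$ produce the remaining quotients in \eqref{n0a}. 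With this correction, the rest of your plan — the list of terms, the absorption of the diffusive differences via \eqref{n0b} with the $1/(40C_0)$ slack, the use of the projection \eqref{a9-mean} for the Neumann problem, and the recovery of the mean of $H$ from \eqref{a10b} into $\Phi_0$ — reproduces the paper's proof.
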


\begin{proof} The proof will consist of two steps, which correspond to the estimates for velocity and temperature solving systems \eqref{a5} and \eqref{a9} respectively. 

\smallskip 

{\it Step 1.} Estimating   the velocity field $N$, which solves system \eqref{a5} we will essentially use Lemma \ref{l:stokes}. Thus
\begin{equation}\label{n1}\begin{split}
 \Xi_{\un}^p(N)&\leq C_0\big[ \|(1-\varrho)N_t\|_{L_p((0,T)\times\Omega)} + \|(1-\varrho)S_t\|_{L_p((0,T)\times\Omega)} \\
& + \| \varrho N \cdot \nabla N,\quad \varrho N \cdot \nabla S, \quad \varrho S \cdot \nabla N, 
\quad \varrho S\cdot \nabla S\|_{L_p((0,T)\times\Omega)} \\
&+\frac{\|\nu(\theta)-\un\|_{L_\infty((0,T)\times\Omega)}}{\un}(\|\un \nabla^2 N\|_{L_p((0,T)\times\Omega)} +
 \|\un \nabla^2 S\|_{L_p((0,T)\times\Omega)})\\
&+\|\nu'(\theta) |\nabla H| |\nabla N|\|_{L_p((0,T)\times\Omega)} +
\|\nu'(\theta) |\nabla E| |\nabla N|\|_{L_p((0,T)\times\Omega)} \\
&+
\|\nu'(\theta) |\nabla H| |\nabla S|\|_{L_p((0,T)\times\Omega)}
 +\|\nu'(\theta) |\nabla E| |\nabla S|\|_{L_p((0,T)\times\Omega)}].
\end{split}\end{equation}
We collect estimates of each  term of the right-hand side of \eqref{n1}
\begin{equation}
 \|(1-\varrho)N_t\|_{L_p((0,T)\times\Omega)}\leq C\|1-\varrho\|_{L_\infty((0,T)\times\Omega)}
 \|N_t\|_{L_p((0,T)\times\Omega)}\le \frac{1}{40C_0}\Xi^p_{\un}(N),
\end{equation}
\begin{equation}
 \|(1-\varrho)S_t\|_{L_p((0,T)\times\Omega)}\leq C\|1-\varrho\|_{L_\infty((0,T)\times\Omega)}
 \|S_t\|_{L_p((0,T)\times\Omega)}\le \frac{1}{40C_0}\|v_0\|_{W^{2-2/p}_p(\Omega)}.
\end{equation}
Above we used the fact (\ref{ass1}) that 
$\|1-\varrho_0\|_{L_\infty(\Omega )}\le\frac{1}{40C_0}$.
At this point one understands the meaning of the smallness of  $c$ in   (\ref{ass1}). The constant
$c$ needs to be  small in comparison to constant 
$C_0$ appearing  in estimate (\ref{stokes1}) from Lemma \ref{l:stokes}. Next
\begin{equation}\label{n2}
\begin{split}
 \|\varrho N\cdot \nabla N\|_{L_p((0,T)\times\Omega)}&
 \leq C \|\varrho\|_{L_\infty((0,T)\times\Omega)}\|N\|_{L_\infty(0,T;L_p)} \|\nabla N\|_{L_p(0,T;L_\infty)}\\
 &\leq 
C \un^{-2+1/p} (\Xi^p_{\un}(N))^2\|\varrho_0\|_{L_\infty(\Omega)}.
\end{split}
\end{equation}
By entirely similar arguments we conclude also that
\begin{equation}\label{n3}
\begin{split}
 \|\varrho N\cdot \nabla S\|_{L_p((0,T)\times\Omega)} 
\leq 
C \un^{-2+1/p} \Xi_{\un}^p(N)\|v_0\|_{W^{2-2/p}_p(\Omega)}\|\varrho_0\|_{L_\infty(\Omega)},
\end{split}
\end{equation}
\begin{equation}\label{n4}
\begin{split}
 \|\varrho S \cdot \nabla N\|_{L_p((0,T)\times\Omega)}
 \leq 
C \un^{-1} \|v_0\|_{W^{2-2/p}_p(\Omega)} \Xi_{\un}^p(N)\|\varrho_0\|_{L_\infty(\Omega)}
\end{split}
\end{equation}
and 
\begin{equation}\label{n5}
\begin{split}
 \|\varrho S\cdot \nabla S\|_{L_p((0,T)\times\Omega)}
 \leq 
C \un^{-1} \|v_0\|^2_{W^{2-2/p}_p(\Omega)}\|\varrho_0\|_{L_\infty(\Omega)}.
\end{split}
\end{equation}
To justify the estimate
\begin{equation}\label{n6}
 \frac{\|\nu(\theta)-\un\|_{L_\infty((0,T)\times\Omega)}}{\un}\|\un \nabla^2 N\|_{L_p((0,T)\times\Omega)}  
\leq \frac{1}{40 C_0} \|\un \nabla^2 N\|_{L_p((0,T)\times\Omega)} 
\leq \frac{1}{40 C_0} \Xi_{\un}^p(N),
\end{equation}
we use (\ref{n0b}).
The constraint (\ref{n0b}) gives a clear explanation  why the growth of 
$\nu(\cdot)$ must be lower than exponential.
In the same way  (\ref{n0b}) is necessary to hold for the a priori estimate
\begin{equation}\label{n8}
 \frac{\|\nu(\theta)-\un\|_{L_\infty((0,T)\times\Omega)}}{\un}\|\un \nabla^2 S\|_{L_p((0,T)\times\Omega)} 
 \leq \frac{1}{40C_0} \|v_0\|_{W^{2-2/p}_p(\Omega)}.
\end{equation}
We stress that since $p>7$ we have 
 the embeddings $W^{1,2}_p ( (0,T)\times \Omega) \subset L_\infty((0,T)\times \Omega ) $ with suitable constant depending on $\un$ and
$\nabla W^{1,2}_{p/2}((0,T)\times \Omega) \subset L_\infty (0,T;L_p(\Omega))$, 
and indeed, the lower value of $p$ would not be possible. This allows to provide the estimates for the remaining terms
\begin{multline}\label{n9}
 \|\nu'(\theta) |\nabla H| |\nabla N|\|_{L_p((0,T)\times\Omega)} \leq C
\|\nu'(\theta)\|_{L_\infty((0,T)\times\Omega)} \|\nabla H\|_{\LinftyLp} \|\nabla N\|_{\LpLinfty}
\\
\leq C \frac{\|\nu'(\theta)\|_{L_\infty((0,T)\times\Omega)}}{\un} \uk^{-1+1/p}\Xi_{\uk}^{p/2}(H) 
\Xi_{\un}^p(N)
\end{multline} 
and again in the same manner we obtain
\begin{equation}\label{n10}
 \|\nu'(\theta) |\nabla E| |\nabla N|\|_{L_p((0,T)\times\Omega)} 
\leq C\frac{\|\nu'(\theta)\|_{L_\infty((0,T)\times\Omega)}}{\un} \|\theta_0 -\ut\|_{W^{2-4/p}_{p/2}(\Omega)}
 \Xi_{\un}^p(N),
\end{equation}
\begin{equation}\label{n11}
 \|\nu'(\theta) |\nabla H | |\nabla S|\|_{L_p((0,T)\times\Omega) } 
\leq C\frac{\|\nu'(\theta)\|_{L_\infty((0,T)\times\Omega)} }{\un} \uk^{-1+2/p} \Xi^{p/2}_{\uk}(H) 
\|v_0\|_{W^{2-2/p}_p(\Omega)},
\end{equation}
\begin{equation}\label{n12}
 \|\nu'(\theta) |\nabla E | |\nabla S|\|_{L_p((0,T)\times\Omega)} 
\leq C\frac{\|\nu'(\theta)\|_{L_\infty((0,T)\times\Omega)}}{\un} \|\theta_0-\ut\|_{W^{2-4/p}_{p/2}(\Omega)} \|v_0\|_{W^{2-2/p}_p(\Omega)}.
\end{equation}
Collecting the above estimates gives
\begin{multline}\label{n14}
 \Xi_{\un}^p(N)\leq C\big [ \un^{-2+1/p} (\Xi^p_{\un}(N) )^2
+ \frac{\|\nu'(\theta)\|_{L_\infty((0,T)\times\Omega)}}{\un} \uk^{-1+2/p}\Xi_{\uk}^{p/2}(H) \Xi_{\un}^p(N) + \\
+\frac{\|\nu'(\theta)\|_{L_\infty((0,T)\times\Omega)}}{\un} \|v_0\|_{W^{2-2/p}_p(\Omega)} \uk^{-1+2/p}\Xi_{\uk}^{p/2}(H) + 
\frac{\|\nu'(\theta)\|_{L_\infty((0,T)\times\Omega)}}{\un} \|\theta_0-\ut\|_{W^{2-4/p}_{p/2}(\Omega)} \Xi_{\un}^p(N)\\
+ \un^{-1} \|v_0\|^2_{W^{2-2/p}_p(\Omega)} + C\|v_0\|_{W^{2-2/p}_{p}(\Omega)} +
 \frac{ \|\nu'(\theta)\|_{L_\infty((0,T)\times\Omega)} }{\un} \|\theta_0-\ut\|_{W^{2-4/p}_{p/2}(\Omega)} \|v_0\|_{W^{2-2/p}_p(\Omega)} \big].
\end{multline}

Together with \eqref{n0b} once we  provide that the following quantities are sufficiently small,~i.e.
\begin{equation}\label{n13}
  \un^{-1+1/p} \|v_0\|_{W^{2-2/p}_p(\Omega)},\;
 \frac{\|\nu'(\theta)\|_{L_\infty((0,T)\times\Omega)}}{\un}, \;
\frac{\|\nu'(\theta)\|_{L_\infty((0,T)\times\Omega)}}{\un} \|\theta_0 -\ut\|_{W^{2-4/p}_{p/2}(\Omega)} \leq \frac{1}{40 C_0},
\end{equation}
we conclude
%

\begin{multline}\label{n14a}
 \Xi_{\un}^p(N)\leq C\big [ \un^{-2+1/p} \, (\Xi^p_{\un}(N) )^2
+ \frac{\|\nu'(\theta)\|_{L_\infty((0,T)\times\Omega)}}{\un\,  \uk^{1-2/p} } \, \Xi_{\uk}^{p/2}(H) \Xi_{\un}^p(N) + \\
+\frac{\|\nu'(\theta)\|_{L_\infty((0,T)\times\Omega)}}{\un \, \uk^{1-2/p}} \, \|v_0\|_{W^{2-2/p}_p(\Omega)} \Xi_{\uk}^{p/2}(H) + 
  C\|v_0\|_{W^{2-2/p}_p(\Omega)} \big].
\end{multline}

\bigskip

{\it Step 2}. In the second part we concentrate on the temperature. We have to recall first that because of the boundary conditions of  Neumann type we
do not control the very function $H$ for the estimates, but only $H-\{H\}$. Nevertheless,  Lemma \ref{l:mean} and (\ref{a10b}) allow to  control 
\begin{equation}
 |\int H(t) dx| \leq {\rm initial \; data}.
\end{equation}
Applying (\ref{l1e}) we find
\begin{equation}\label{n15}
\begin{split}
 \Xi_{\uk}^{p/2}(H) &\leq \big[ 
\| (1-\varrho)(H-\{H\})_t\|_{L_{p/2}((0,T)\times\Omega)}+\| (1-\varrho)E_t\|_{L_{p/2}((0,T)\times\Omega)}\\
&+\|\varrho N \cdot \nabla H\|_{L_{p/2}((0,T)\times\Omega)} + \|\varrho S \cdot \nabla H\|_{L_{p/2}((0,T)\times\Omega)}  \\
&+\|\varrho N\cdot \nabla E\|_{L_{p/2}((0,T)\times\Omega)} + \|\varrho S\cdot \nabla E\|_{L_{p/2}((0,T)\times\Omega)} \\
&+ \frac{\|\nu(\theta)\|_{L_\infty((0,T)\times\Omega)}}{\un^2} \|\un \nabla^2 N\|^2_{L_p((0,T)\times\Omega)} + \frac{\|\nu(\theta)\|_{L_\infty((0,T)\times\Omega)}}{\un^2}\|\un \nabla^2 S\|_{L_p((0,T)\times\Omega)}^2 \\
 &+\frac{\|\kappa(\theta) -\uk\|_{L_\infty((0,T)\times\Omega)}}{\uk} \|\uk \nabla^2 H\|_{L_{p/2}((0,T)\times\Omega)} + \frac{\|\kappa(\theta) -\uk\|_{L_\infty((0,T)\times\Omega)}}{\uk}\|\uk \nabla^2 E\|_{L_{p/2}((0,T)\times\Omega)}\\
&+\|\kappa'(\theta)\|_{L_\infty((0,T)\times\Omega)} \|\nabla H\|^2_{L_p((0,T)\times\Omega)} +\|\kappa'(\theta)\|_{L_\infty((0,T)\times\Omega)} \|\nabla H\|_{L_p((0,T)\times\Omega)}\|\nabla E\|_{L_p((0,T)\times\Omega)} \\
&+ 
\|\kappa'(\theta)\|_{L_\infty((0,T)\times\Omega)}\|\nabla E\|_{L_p((0,T)\times\Omega)}^2 \big].
\end{split}
\end{equation}
We estimate  the terms from the right-hand side of (\ref{n15}) step by step
\begin{equation}
 \|(1-\varrho)(H-\{H\})_t\|_{L_{p/2}((0,T)\times\Omega)}\leq C\|1-\varrho\|_{L_\infty((0,T)\times\Omega)}
 \|(H-\{H\})_t\|_{L_{p/2}((0,T)\times\Omega)}\le \frac{1}{40C_0}\Xi^{p/2}_{\uk}(H).
\end{equation}
Here we use the fact that 
$$
\left\{ (1-\varrho)\{H\}_t - \{(1-\varrho)\{H\}_t\} \right\} =0,
$$
since $\int_\Omega \varrho (t) dx$ is constant in time. The estimates of the next two terms follow in a simple way

\begin{equation}
 \|(1-\varrho)E_t\|_{L_{p/2}((0,T)\times\Omega)}\leq C\|1-\varrho\|_{L_\infty((0,T)\times\Omega)}
 \|E_t\|_{L_{p/2}((0,T)\times\Omega)}\le \frac{1}{40C_0}\|\theta_0-\ut\|_{W^{2-4/p}_{p/2}(\Omega)},
\end{equation}

\begin{equation}\label{n16}
\begin{split}
 \|\varrho N \cdot \nabla H\|_{L_{p/2}((0,T)\times\Omega)} 
&\leq C\|\varrho\|_{L_{\infty}( (0,T)\times\Omega)}\|N\|_{L_\infty(0,T; L_p)} \|\nabla H\|_{L_{p/2}(0,T;L_p)} \\
&\leq 
C \un^{-1+1/p} \uk^{-1}\|\varrho_0\|_{L_{\infty}(\Omega)} \Xi_{\un}^p (N) \Xi_{\uk}^{p/2}(H),
\end{split}
\end{equation}
whereas   the next term we  estimate  differently
\begin{equation}\label{n17}
\begin{split}
 \|\varrho S \cdot \nabla H\|_{L_{p/2}((0,T)\times\Omega)}& \leq C\|\varrho\|_{L_{\infty}( (0,T)\times\Omega)}\|S\|_{L_{p/2}(0,T;L_\infty)}
\|\nabla H\|_{L_\infty(0,T; L_{p/2})}  \\ 
&\leq C \|\varrho_0\|_{L_{\infty}(\Omega)}\| e^{-c\un t} \|_{L_{p/2}(0,T)} \|v_0\|_{W^{2-2/p}_p(\Omega)} \uk^{-1} \Xi_{\uk}^{p/2}(H)\\
& \leq 
C \un^{-2/p} \uk^{-1} \|\varrho_0\|_{L_{\infty}(\Omega)}\|v_0\|_{W^{2-2/p}_p(\Omega)} \Xi_{\uk}^{p/2}(H).
\end{split}
\end{equation}
In (\ref{n17}) we used (\ref{l2a}) from Lemma \ref{l:stokes} in order to obtain better information in the terms of $\un$. Next
\begin{equation}\label{n18}
 \|\varrho N\cdot \nabla E\|_{L_{p/2}((0,T)\times\Omega)} \leq C \un^{-1+1/p} \Xi_{\un}^{p} (N) \uk^{-1}\|\theta_0-\ut\|_{W^{2-4/p}_{p/2}(\Omega)}\|\varrho_0\|_{L_\infty(\Omega)},
\end{equation}
\begin{equation}\label{n20}
 \|\varrho S\cdot \nabla E\|_{L_{p/2}((0,T)\times\Omega)}\leq C \un^{-2/p} \|v_0\|_{W^{2-2/p}_p(\Omega)} \|\theta_0 - \ut\|_{W^{2-4/p}_p(\Omega)}\|\varrho_0\|_{L_\infty(\Omega)},
\end{equation}
\begin{equation}\label{n21}
\|\kappa'(\theta)\|_{L_\infty((0,T)\times\Omega)} \|\nabla H\|_{L_p((0,T)\times\Omega)}^2 \leq \|\kappa'(\theta)\|_{L_\infty} \uk^{-2+4/p} (\Xi^{p/2}_{\uk}(H))^2,
\end{equation}
\begin{equation}\label{n22}
\begin{split}
\|\kappa'(\theta)\|_{L_\infty((0,T)\times\Omega)} &\|\nabla H\|_{L_p((0,T)\times\Omega)}\|\nabla E\|_{L_p((0,T)\times\Omega)} \\
&
 \leq \|\kappa'(\theta)\|_{L_\infty((0,T)\times\Omega)} 
\uk^{-3/2+2/p} \Xi_{\uk}^{p/2}(H) \|\theta_0 - \ut\|_{W^{2-4/p}_{p/2}(\Omega)},
\end{split}\end{equation}
\begin{equation}\label{n23}
\|\kappa' (\theta)\|_{L_\infty((0,T)\times\Omega)} \|\nabla S\|_{L_p((0,T)\times\Omega)}^2 \leq \|\kappa' (\theta)\|_{L_\infty((0,T)\times\Omega)} \uk^{-1}  \|\theta_0 - \ut\|_{W^{2-4/p}_{p/2}(\Omega)}^2,
\end{equation}
\begin{equation}\label{n24}
\begin{split}
 \frac{\|\nu(\theta)\|_{L_\infty((0,T)\times\Omega)}}{\un^2}& \|\un \nabla^2 N\|^2_{L_p((0,T)\times\Omega)}
 \\&\le 
 \frac{1}{\un} \left(\frac{1}{\un}+ \frac{\|\nu(\theta) - \un\|_{L_\infty((0,T)\times\Omega)}}{\un}\right)\|\un \nabla^2 N\|^2_{L_p((0,T)\times\Omega)}
 \le \frac{C}{\un} (\Xi_{\un}^p(N))^2.
\end{split}\end{equation}

In analysis (\ref{n16})-(\ref{n24}) we used the following facts:
\begin{equation}
 \uk \nabla H \in L_{p/2}(0,T;L_\infty(\Omega)) \mbox{ \ and \ } \uk^{1-4/p} \nabla H \in L_\infty(0,T;L_p(\Omega))
\end{equation}
and 
\begin{equation}
 \uk \nabla E \in L_{p/2}(0,T;L_\infty(\Omega)) \mbox{ \ and \ } \nabla E \in L_\infty(0,T;L_p(\Omega)).
\end{equation}
On the other hand 
\begin{equation}
 \|X\|_{L_p(0,T;L_p)} \leq \|X\|_{L_{p/2}(0,T;L_p)}^{1/2} \|X\|_{L_\infty(0,T;L_p)}^{1/2}.
\end{equation}
We require  that $\ut$ is sufficiently large that (\ref{n0b}) 
holds. 
In addition we require that
\begin{equation}\label{n25}
 \un^{2/p}\,\kappa^{-1} \| v_0\|_{W^{2-2/p}_p(\Omega)}, \|\kappa'(\theta)\|_{L_\infty((0,T)\times\Omega)} \uk^{-3/2+2/p}\|\theta_0-\ut\|_{W^{2-4/p}_{p/2}(\Omega)} \leq \frac{1}{40 C_0}.
\end{equation}

Altogether gives us
\begin{multline}
 \Xi_{\uk}^{p/2}(H) \leq \frac{\|\nu(\theta) - \un\|_{L_\infty((0,T)\times\Omega)}}{\un^2} \, (\Xi_{\un}^p(N))^2+
\un^{-1+1/p}\uk^{-1}\, \Xi_{\uk}^{p/2}(H) \Xi_{\un}^p(N) \\
+ \frac{\|\kappa'(\theta)\|_{L_\infty((0,T)\times\Omega)}}{ \uk^{2-4/p} }\, (\Xi_{\uk}^{p/2}(H))^2 + 
\un^{-1+1/p} \uk^{-1} \|\theta_0-\ut\|_{ W^{2-4/p}_p(\Omega) } \Xi^p_{\un}(N)\\
+ \|\kappa'(\theta)\|_{L_\infty((0,T)\times\Omega)} \uk^{-1}\,  \|\theta_0 - \ut\|_{ W^{2-4/p}_{p/2}(\Omega) }^2 + \|\nu(\theta)\|_{ L_\infty((0,T)\times\Omega) }\un^{-2} \, \|v_0\|_{ W^{2-2/p}_p(\Omega)}^2 + 
C\|\theta_0-\ut\|_{ W^{2-4/p}_{p/2}(\Omega) }.
\end{multline}

Having these estimates we are able to prove the following 
\begin{multline}
 \Xi_{\un}^p(N) + \Xi_{\uk}^{p/2}(H) \leq \\ 
(\un^{-2+1/p} + \frac{\|\nu(\theta)\|_{L_\infty((0,T)\times\Omega)} }{\un^2})\,(\Xi_{\un}^p(N))^2 
 + (\frac{\|\nu'(\theta)\|_{L_\infty((0,T)\times\Omega)}}{\un \uk^{1-1/p}} +\un^{-1+1/p}\uk{-1})\, 
\Xi_{\uk}^{p/2}(H) \Xi_{\un}^p(N) \\
+ \frac{\|\kappa'(\theta)\|_{L_\infty((0,T)\times\Omega)}}{\uk^{2-4/p} }\, (\Xi_{\uk}^{p/2}(H))^2
+\Phi (\|v_0\|,\|\theta_0-\ut\|),
\end{multline}
what immediately implies (\ref{n0}) and thus Proposition \ref{p:apr}  is proved. The smallness of $K$ is defined by (\ref{n13}) and (\ref{n25}).
\end{proof}

Finally we need  to show that Proposition \ref{p:apr} gives the a priori bound for solutions as $\ut$ is sufficiently large.

\smallskip

\begin{proposition}
 Let $(\varrho_0, v_0,\theta_0-\ut)\in L_\infty(\Omega) \times W^{2-2/p}_p(\Omega) \times W^{2-4/p}_{p/2}(\Omega)$. If 
$\ut$ is sufficiently large then
\begin{equation}\label{k0}
 \Xi_{\un}^p(N) + \Xi_{\uk}^{p/2}(H) \leq 2 \Phi_0(\varrho_0, v_0,\theta_0-\ut).
\end{equation}
\end{proposition}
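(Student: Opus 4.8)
The plan is to read the quadratic inequality \eqref{n0} as a bootstrap in the end time $T$ and to close it by a continuity (open–closed) argument. Write $X(T):=\Xi^p_{\un}(N)+\Xi^{p/2}_{\uk}(H)$, all norms being taken over $(0,T)\times\Omega$, and recall that $N$ and $H$ carry zero initial data. Then $X$ is a continuous, nondecreasing function of $T$ with $X(T)\to 0$ as $T\to 0^+$: the space–time $L_p$ and $L_{p/2}$ norms vanish as the interval shrinks, and the weighted time-sup norms start from $N|_{t=0}=H|_{t=0}=0$. Throughout I hold $\Phi_0$ and the data norms $\|v_0\|_{W^{2-2/p}_p}$, $\|\theta_0-\ut\|_{W^{2-4/p}_{p/2}}$ fixed while letting $\ut\to\infty$, so that $\un=\ut^m\to\infty$ and $\uk=(1+\ut)^l\to\infty$.

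First I would analyse the scalar inequality $X\le A_0KX^2+\Phi_0$. With $a:=A_0K$ and $\phi:=\Phi_0$, the polynomial $aX^2-X+\phi$ is nonnegative exactly on $\{X\le X_-\}\cup\{X\ge X_+\}$, where $X_\pm=\frac{1\pm\sqrt{1-4a\phi}}{2a}$, provided $4a\phi<1$. Rationalising the numerator gives $X_-=\frac{2\phi}{1+\sqrt{1-4a\phi}}\le 2\phi$, with strict inequality whenever $\phi>0$, while $X_+\ge\frac{1}{2a}\to\infty$ as $K\to 0$. Hence, once $K$ is small enough that $4A_0K\Phi_0<1$, the admissible set splits into a low branch $[0,X_-]$ with $X_-\le 2\Phi_0$ and a high branch $[X_+,\infty)$ separated by a forbidden gap $(X_-,X_+)$ whose width grows without bound as $K\to 0$.

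The heart of the matter is the self-consistent verification that the smallness hypotheses \eqref{n0b}, \eqref{n13} and \eqref{n25} of Proposition \ref{p:apr} genuinely hold along the solution, so that \eqref{n0} is available; this is where the difficulty lies, since $K$ in \eqref{n0a} is not a datum but depends on the unknown $\theta$ through $\|\nu(\theta)\|_{L_\infty}$, $\|\nu'(\theta)\|_{L_\infty}$ and $\|\kappa'(\theta)\|_{L_\infty}$. Let $[0,T^*)$ be the maximal existence interval from Section~4 and set $T_0:=\sup\{T\in[0,T^*): X(s)\le 2\Phi_0 \text{ for all } s\le T\}$. On $[0,T_0)$ the bound $X\le 2\Phi_0$, combined with the decomposition $\theta=H+E+\ut$, the lower bound $\theta\ge\ut$ of Lemma \ref{est-theta}, the control of the mean $\{H\}$ from \eqref{a10b}, and the embeddings $W^{1,2}_{p/2}\hookrightarrow L_\infty$ valid for $p>7$, yields $\sup_{(0,T)\times\Omega}\theta\le\ut+C(\Phi_0,\mathrm{data})$ with $C$ independent of $\ut$. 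Inserting this together with $\nu(\theta)=\theta^m$ and $\kappa(\theta)=(1+\theta)^l$ into \eqref{n0a} bounds every entry of $K$ by a negative power of $\ut$; in particular the relative deviations obey $\|\nu(\theta)-\un\|_{L_\infty}/\un\lesssim \|\theta-\ut\|_{L_\infty}/\ut\to 0$ and likewise for $\kappa$. Thus for $\ut$ large $K$ is as small as desired, \eqref{n0b}, \eqref{n13}, \eqref{n25} are met, and Proposition \ref{p:apr} applies on $[0,T_0)$.

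Finally I would close the continuation. On $[0,T_0)$ we now have simultaneously $X\le 2\Phi_0$ and, from the quadratic inequality with $4A_0K\Phi_0<1$, the sharper conclusion $X\le X_-<2\Phi_0$ (the case $\Phi_0=0$ forcing $X\equiv 0$). By continuity $X(T_0)\le X_-<2\Phi_0$, so the defining inequality $X\le 2\Phi_0$ persists on a slightly larger interval, contradicting the maximality of $T_0$ unless $T_0=T^*$. Hence $X\le 2\Phi_0$ on all of $[0,T^*)$; this uniform bound on the regularity norms triggers the continuation criterion of the existence theory of Section~4, forcing $T^*=\infty$, and \eqref{k0} follows. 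The essential point, and the main obstacle, is that the quadratic-gap structure of \eqref{n0} is precisely what lets the a priori bound $X\le 2\Phi_0$ and the largeness of $\ut$ feed back on one another — keeping $\theta$ relatively close to $\ut$ and thereby $K$ small — without circularity.
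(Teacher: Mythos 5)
Your proposal is correct and follows essentially the same route as the paper: the quadratic inequality of Proposition \ref{p:apr}, the two-root gap under a smallness condition of the form $K\leq 1/(4A_0(\Phi_0+1))$, a continuity-in-$T$ argument starting from the small-time bound, and the verification that large $\ut$ forces $K$ small via the asymptotics $\frac{\nu(\theta)-\un}{\un}\sim\frac{(\Phi_0+\ut)^m-\ut^m}{\ut^m}\sim\frac{\Phi_0}{\ut}$. The paper merely compresses the continuation step into ``a simple contradiction argument'' and leaves the self-consistency of the $K$-bound (i.e.\ that $\theta\leq\ut+C(\Phi_0,\mathrm{data})$ under the running bound $X\leq 2\Phi_0$) implicit in those asymptotic relations, whereas you spell both out explicitly; this is intended detail, not a different method.
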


\begin{proof}
By Proposition \ref{p:apr} we have
\begin{equation}\label{}
 \Xi_{\un}^p(N) + \Xi_{\uk}^{p/2}(H) \leq A_0 K (\Xi_{\un}^p(N) + \Xi_{\uk}^{p/2}(H) )^2  + \Phi_0,
\end{equation}
for all $T>0$. Is is clear that for small $T$
\begin{equation}
 \Xi_{\un}^p(N) + \Xi_{\uk}^{p/2}(H) \leq 2\Phi_0,
\end{equation}
even  without any restriction on the smallness of  $K$. We show however that if $K$ is small then $T$ can be infinite.  Note that as 
\begin{equation}\label{K}
 K \leq \frac{1}{4A_0 (\Phi_0+1)},
\mbox{ \ then  \ \ }
 \Xi_{\un}^p(N) + \Xi_{\uk}^{p/2}(H) < 2\Phi_0.
\end{equation}
and by a  simply contradiction argument one shows that $T=\infty$  as $K$ fulfills (\ref{K}).

Let us describe the dependence between $\ut$ and $\Phi_0$ in order to guarantee \eqref{K}. Consider the term
\begin{equation}\label{k1}
 \frac{\nu (\theta)-\un}{\un} \sim \frac{(\Phi_0 + \ut)^m-\ut^m}{\ut^m} \sim \frac{\Phi_0}{\ut}.
\end{equation}
Similarly we check  the behavior of the  term $\frac{\nu(\theta)}{\un^2}$. And
\begin{equation}
 \frac{\kappa'(\theta)}{\uk^{2-4/p}}\sim \frac{ (\Phi_0+\ut)^{l-1} }{ \ut^{10/7\, l} } \sim \frac{1}{\ut^{3/7\,l + 1}}\frac{ (\Phi_0+\ut)^{l-1}}{\ut^{l-1}}.
\end{equation}
We observe that as 
\begin{equation}
 \frac{\Phi_0}{\ut} \ll 1,
\end{equation}
then (\ref{K}) is fulfilled. Note that the smallness of the above quantity is of the same type as (\ref{k1}). We claim (\ref{k0}) holds for all $T$, provided (\ref{K}). 

\end{proof}

\section{Existence and asymptotic behaviour of solutions}

\begin{proof}[Proof of Theorem~\ref{main}]
The scheme of existence of solutions to  system (\ref{main}) follows from a modification of the approach to the inhomogeneous Navier-Stokes equations \cite{DaMu2009}.
The obstacle is the hyperbolic character of the continuity equation resulting into problems in direct application of the Banach iteration procedure. The solution 
of this problem is the following. We construct a sequence of approximative solutions to (\ref{main}) by a natural iteration procedure. We show the sequence is
uniformly bounded in spaces defined by the a priori estimates from Section \ref{S:apriori} and we accomplish by showing  that the sequence is indeed a Cauchy sequence in a larger space 
of the $L_2$-type. 

To avoid complex language of Lagrangian coordinates like in \cite{DaMu2012} we stay in the Eulerian coordinates framework. Hence in order to perform an existence and uniqueness result
we need to add an  assumption of the regularity of the initial density, i.e. $\nabla \rho_0 \in L_p(\Omega)$.

We construct approximate solutions inductively. Let $(\varrho^0,  v^0, \theta^0):=
(\varrho_0, v_0,\theta_0)$ and $(\varrho^k,  v^k, \theta^k)$ satisfy in $(0,T)\times\Omega$
\begin{equation}
\begin{split}\label{app}
&\varrho^k_t+v^{k-1}\nabla\varrho^k=0,\\
&\varrho^k v^k_t+\varrho^kv^{k-1}\nabla v^{k}-\div(\nu(\theta^{k-1})\D( v^k))+\nabla \pi^k=0\\
&\varrho^k\theta^k_t+\varrho^kv^{k-1}\nabla\theta^{k}-\div(\kappa(\theta^{k-1}) \nabla \theta^k)=\nu(\theta^{k-1})\D^2 (v^{k}),\\
&\div v^{k}=0,\\
\end{split}
\end{equation}
\begin{equation}
\theta^k|_{t=0}= 0, \quad v^k|_{t=0}= 0, \quad \varrho^k|_{t=0}=\varrho_0 \qquad \mbox{in }\Omega.
\end{equation}
Assume that $(v^{k-1}, \theta^{k-1})$ are given and 
 \begin{equation}\label{k-1}
  \Xi_{\un}^p(v^{k-1}) + \Xi_{\uk}^{p/2}(\theta^{k-1}) \leq M.
 \end{equation}
 Since $p$ is sufficiently large  ($>7$) providing 
$v^{k-1},\nabla v^{k-1}\in L_p(0,T;L_\infty(\Omega))$,
we solve \eqref{app}$_1$ using the standard theory for transport equation to find $\varrho^k$ and  conclude that for all $t\ge0$
\begin{equation}
\|1-\varrho^k(t)\|_{L_\infty(\Omega)}=\|1-\varrho_0\|_{L_\infty(\Omega)}.
\end{equation}

Then we solve a linear problem \eqref{app}$_2$,  \eqref{app}$_4$ to find $v^k$
such that $  \Xi_{\un}^p(v^{k})  \leq C$,  see~\cite{DaMu2009}, where the issue of  regularity of perturbed Stokes problem is addressed. We insert $v^k$ in \eqref{app}$_3$
and by means of the same estimates as the ones provided  in Section~3  conclude 
that $\Xi_{\uk}^{p/2}(\theta^{k}) \leq C$.
By induction we claim for each $k\in{\mathbb N}$
the existence of solution $(\varrho^k, v^k, \theta^k)$ to the above problem. 

To show that the  mapping  $(\cdot)^{(n)} \to (\cdot)^{(n+1)}$ given by (\ref{app}) is  a contraction, we are required to have some information about the regularity of the density. However we have
\begin{equation}
 \frac{d}{dt}\|\nabla \rho^k\|_{L_p(\Omega)}\leq C\|\nabla v^{k-1}\|_{L_\infty(\Omega)} \|\nabla \rho^k\|_{L_p(\Omega)},
\end{equation}
so
\begin{equation}
 \sup_{t\leq T} \|\nabla \rho^k(t)\|_{L_p(\Omega)} \leq \|\nabla \rho_0\|_{L_p(\Omega)} \exp\{ \int_0^T C\|\nabla v^{k-1}(t)\|_{L_\infty} dt\} \leq \|\nabla \rho_0\|_{L_p(\Omega)} C(T).
\end{equation}
The bound does not depend on $k$, but it depends on $T$. We could remove this dependence analyzing in more detail the long time behavior of the solution. Nevertheless for the issue of existence
such dependence is not problematic.


\smallskip

For $\varphi=\varrho, v$ or $\pi$ we use the notation
\begin{equation}
\delta \varphi^{m}:=\varphi^m-\varphi^{m-1},\ m\in{\mathbb N}.
\end{equation}
Then $\delta\varrho^k$ satisfies
\begin{equation}\label{mass-delta}
\delta\varrho^k_t+v^{k-1}\nabla \delta\varrho^k=-\delta v^{k-1}\nabla \varrho^{k-1}
\end{equation}
and multiplying the above equation by $\delta \varrho^k$, integrating   over $(0,T)\times\Omega$, using that $\div v^{k-1}=0$   and estimating the right-hand side gives
\begin{equation}
\begin{split}
\frac{1}{2}\|\delta\varrho^k(t)\|^2_{L_2(\Omega)}&\le\int_0^t\|\delta v^{k-1}\|_{L_6(\Omega)}\|\nabla\varrho^{k-1}\|_{L_3(\Omega)}\|\delta\varrho^k\|_{{L_2(\Omega)}}\ d\tau\\
&\le\frac{1}{4}\|\delta\varrho^k\|_{L_\infty(0,T;L_2)}^2+C(T)\|\nabla \varrho^k_0\|_{L_p(\Omega)}^2
\int_0^t\|\D(\delta v^{k-1})\|_{L_2(\Omega)}^2\ d\tau, 
\end{split}
\end{equation}
hence 
\begin{equation}
\sup_{0\leq \tau\leq t}\|\delta\varrho^k(\tau)\|^2_{L_2(\Omega)}\le C(T)\int_0^t\|\D(\delta v^{k-1})\|_{L_2(\Omega)}^2\ d\tau.
\end{equation}
The momentum balance gives
\begin{equation}\label{mom-delta}
\varrho^k\delta v^k_t -\div(\nu(\theta^{k-1})\D(\delta v^k))+\nabla \delta\pi^k
={\mathcal R}^k,
\end{equation}
where the remainder term has the following form
\begin{equation}
\begin{split}
{\mathcal R}^k=&-\delta\varrho^kv^{k-1}_t-
\varrho^k v^{k-1}\nabla\delta v^k+\nabla v^{k-1}\delta\varrho^k v^{k-1}\\
&-\nabla v^{k-1}\varrho^{k-1}\delta v^{k-1}
+\div((\nu(\theta^{k-2})-\nu(\theta^{k-1}))\D(v^{k-1})).
\end{split}
\end{equation}
Observe that using \eqref{mass-delta}  one obtains
\begin{equation}\label{calc}
\begin{split}
\int_0^t\int_\Omega\varrho^k\delta  v^k_t\delta v^k \ dx\ d\tau&=\frac{1}{2}\int_0^t\int_\Omega\varrho^k[(\delta  v^k)^2]_t  \ dx\ d\tau=\frac{1}{2}\int_\Omega \varrho^k|\delta v^k|^2\ dx-\frac{1}{2}\int_0^t\int_\Omega\varrho^k_t|\delta v^k|^2\ dx\ d\tau\\
&=\frac{1}{2}\int_\Omega \varrho^k|\delta v^k|^2\ dx+\frac{1}{2}\int_0^t\int_\Omega\div (v^{k-1}\varrho^k)|\delta v^k|^2\ dx\ d\tau\\
&=\frac{1}{2}\int_\Omega \varrho^k|\delta v^k|^2\ dx-\frac{1}{2}\int_0^t\int_\Omega v^{k-1} \varrho^k
\nabla (|\delta v^k|^2)\ dx\ d\tau.
\end{split}
\end{equation}
Multiplying \eqref{mom-delta} by $\delta v^k$, integrating over $(0,T)\times\Omega$  and using \eqref{calc} gives
\begin{equation}\label{calc1}
\frac{1}{2}\int_\Omega \varrho^k|\delta v^k|^2\ dx+\int_0^t\int_{\Omega} \nu(\theta^{k-1})\D^2(\delta v^k) \ dx\ dt=
\int_0^t\int_{\Omega}  \left(v^{k-1}\varrho^k\nabla\delta v^k\delta v^k +{\mathcal R}^k \delta v^k\right)\ dx\ dt.
\end{equation}

Note that the first term on the right-hand side of \eqref{calc1} cancels with the the second term of the remainder ${\mathcal R}$. 
To estimate the right-hand side we proceed as follows
\begin{equation}
\begin{split}
\int_0^t\int_\Omega |\delta\varrho^k v^{k-1}_t\delta v^k|\ dx \ d\tau&\le
C\int_0^t\|\delta\varrho^k\|_{L_2(\Omega)}\| v^{k-1}_t\|_{L_3(\Omega)}\|\delta v^k\|_{L_6(\Omega)}\ d\tau\\
&\le  C(T)\left(t\int_0^t\|\D(\delta v^{k-1})\|_{L_2(\Omega)}^2\ d\tau\right)^{1/2} \|v_t^{k-1}\|_{L_2(0,T;L_3)}
\|\D(\delta v^k)\|_{L_2((0,T)\times\Omega)}\\
&\le C(T)t \int_0^t\|\D(\delta v^{k-1})\|_{L_2(\Omega)}^2\ d\tau+ \un/4\|\D(\delta v^k)\|^2_{L_2((0,T)\times\Omega)},
\end{split}
\end{equation}

\begin{equation}
\begin{split}
\int_0^t\int_\Omega |\nabla v^{k-1}\delta \varrho^k v^{k-1}\delta v^k|\ dx \ d\tau&\le
C\int_0^t \|\nabla v^{k-1}\|_{L_\infty(\Omega)}\|\delta \varrho^k \|_{L_2(\Omega)}\|v^{k-1}\|_{L_3(\Omega)}\|\delta v^k\|_{L_6(\Omega)} \ d\tau\\
&\le C(T)t \int_0^t\|\D(\delta v^{k-1})\|_{L_2(\Omega)}^2\ d\tau+ \un/4\|\D(\delta v^k)\|^2_{L_2((0,T)\times\Omega)},
\end{split}
\end{equation}

\begin{equation}
\begin{split}
\int_0^t\int_\Omega& | \nabla v^{k-1}\varrho^{k-1}\delta v^{k-1} \delta v^k|\ dx \ d\tau\le
C\int_0^t \| \nabla v^{k-1}\|_{L_\infty(\Omega)}\|\varrho^{k-1}\|_{L_\infty(\Omega)}\|\delta v^{k-1}\|_{L_2(\Omega)} \|\delta v^k\|_{L_2(\Omega)} \ d\tau\\
&\le {\un}/{4}\|\D(\delta v^k)\|_{L_2((0,T)\times\Omega)}^2+ C
\| \nabla v^{k-1}\|^2_{L_p(0,T;L_\infty)}\|\varrho^{k-1}\|^2_{L_\infty((0,T)\times\Omega)}
\|\D(\delta v^{k-1})\|^2_{L_2((0,T)\times\Omega)}
\end{split}
\end{equation}

\begin{equation}
\begin{split}
\int_0^t\int_\Omega &|(\nu(\theta^{k-2})-\nu(\theta^{k-1}))\D(v^{k-1})\D(\delta v^k)|\ dx \ d\tau\le
C\int_0^t\int_\Omega|\delta\theta^{k-1}|\cdot|\D(v^{k-1})\D(\delta v^k)|\ dx \ d\tau\\
&\le C\int_0^t\|\delta\theta^{k-1}\|_{L_6(\Omega)}\|\D(v^{k-1})\|_{L_3(\Omega)}\|\D(\delta v^k)\|_{L_2(\Omega)}d\tau\\&
\le C\|\delta\theta^{k-1}\|_{L_2(0,T;L_6)}\|\D(v^{k-1})\|_{L_\infty(0,T;L_3)}\|\D(\delta v^k)\|_{L_2((0,T)\times\Omega)}\\
&\le {\un}/{4}\|\D(\delta v^k)\|_{L_2((0,T)\times\Omega)}^2+ C\|\delta\theta^{k-1}\|^2_{L_2(0,T; L_6)}\|\D(v^{k-1})\|^2_{L_\infty(0,T;L_3)}.
\end{split}
\end{equation}
Finally we arrive at 
\begin{equation}\label{est-delta-v}
2\|\delta v^k(t)\|^2_{L_2(\Omega)}+\un\int_0^t\|\D(\delta v^k)\|^2_{L_2(\Omega)}\ d\tau\le
C(\|\D(\delta \theta^{k-1})\|^2_{L_2((0,T)\times\Omega)}+\|\D(\delta v^{k-1})\|_{L_2((0,T)\times\Omega)}^2).
\end{equation}
The heat equation for $\delta\theta^k$ reads as follows
\begin{equation}\label{temp-delta}
\varrho^k\delta\theta^k_t-\div(\kappa(\theta^{k-1})\nabla \delta\theta^k)={\mathcal J^k}
\end{equation}
and 
\begin{equation}
\begin{split}
{\mathcal J^k}:=& -\delta\varrho^k\theta^{k-1}_t
-
\varrho^kv^{k-1}\nabla (\delta \theta^k)
-\varrho^{k-1}\delta v^{k-1}\nabla \theta^{k-1}\\
&-
\delta \varrho^k v^{k-1}\nabla \theta^{k-1}
+\div((\kappa(\theta^{k-2})-\kappa(\theta^{k-1})))\nabla(\theta^{k-1})\\
&+\nu(\theta^{k-1})\D(\delta v^k)(\D(v^k)+\D(v^{k-1}))
+(\nu(\theta^{k-1})-\nu(\theta^{k-2}))\D^2(v^{k-1}).
\end{split}
\end{equation}
Multiplying \eqref{temp-delta} by $\delta \theta^k$, integrating over $(0,T)\times\Omega$ yields
\begin{equation}
\frac{1}{2}\int_\Omega \varrho^k|\delta \theta^k|^2\ dx+\int_0^t\int_{\Omega} \kappa(\theta^{k-1})|\nabla(\delta \theta^k) |^2\ dx\ d\tau=
\int_0^t\int_{\Omega}  \left(v^{k-1}\varrho^k\nabla\delta \theta^k\delta \theta^k +{\mathcal J}^k \delta \theta^k\right)\ dx\ d\tau.
\end{equation}
Again we estimate the right-hand side
\begin{equation}
\begin{split}
\int_0^t\int_\Omega|\delta\varrho^k\theta^{k-1}_t\delta \theta^k|\ dx\ d\tau&\le
C\int_0^t\|\delta\varrho^k\|_{L_2(\Omega)}\| \theta^{k-1}_t\|_{L_3(\Omega)}\|\delta \theta^k\|_{L_6(\Omega)}\ d\tau\\
&\le  C(T)\left(t\int_0^t\|\D(\delta v^{k-1})\|_{L_2(\Omega)}^2\ d\tau\right)^{1/2} \|\theta_t^{k-1}\|_{L_2(0,T; L_3)}
\|\D(\delta \theta^k)\|_{L_2((0,T)\times\Omega)}\\
&\le C(T)t \int_0^t\|\D(\delta v^{k-1})\|_{L_2(\Omega)}^2\ d\tau+ \uk/4\|\D(\delta \theta^k)\|^2_{L_2( (0,T)\times\Omega)},
\end{split}
\end{equation}

\begin{equation}
\begin{split}
\int_0^t\int_\Omega|&
\varrho^{k-1}\delta v^{k-1}\nabla \theta^{k-1}\delta \theta^k|
\ dx\ d\tau\le
C\int_0^t\|\varrho^{k-1}\|_{L_\infty(\Omega)}\|\delta v^{k-1}\|_{L_2(\Omega)}\|\nabla \theta^{k-1}\|_{L_\infty(\Omega)}\|\delta \theta^k\|_{L_2(\Omega)}\\
&\le\uk/4\|\nabla (\delta\theta^k)\|^2_{L_2((0,T)\times\Omega)}+C\|\varrho^{k-1}\|^2_{L_\infty((0,T)\times\Omega)}\|\nabla \theta^{k-1}\|^2_{L_{p/2}(0,T;L_\infty)}\|\delta v^{k-1}\|^2_{L_2((0,T)\times\Omega)},
\end{split}
\end{equation}



\begin{equation}
\begin{split}
\int_0^t\int_\Omega|
\delta \varrho^k v^{k-1}\nabla \theta^{k-1}\delta \theta^k|
\ dx\ d\tau&\le
C\int_0^t \|\delta \varrho^k\|_{L_2(\Omega)} \|v^{k-1}\|_{L_\infty(\Omega)}\|\nabla \theta^{k-1}\|_{L_3(\Omega)}\|\delta \theta^k\|_{L_6(\Omega)}\ d\tau\\
&\le  C(T)t \int_0^t\|\D(\delta v^{k-1})\|_{L_2(\Omega)}^2\ d\tau+ \uk/4\|\nabla(\delta \theta^k)\|^2_{L_2((0,T)\times\Omega)},
\end{split}
\end{equation}

%
%
%

\begin{equation}
\begin{split}
\int_0^t\int_\Omega|&
((\kappa(\theta^{k-2})-\kappa(\theta^{k-1})))\nabla(\theta^{k-1})\nabla (\delta \theta^k)|
\ dx\ d\tau\le C\int_0^t\int_\Omega|\delta \theta^{k-1}\nabla (\theta^{k-1})\nabla (\delta \theta^k)|
\ dx\ d\tau\\
&\le C\int_0^t \frac{\|\kappa'(\theta)\|_{L_\infty((0,T)\times\Omega)}}{\uk^{1/2}} |\delta \theta^{k-1}| \uk^{1/2}|\nabla \theta^k||\nabla \delta \theta^k|d\tau\\
& \le C\frac{\|\kappa'(\theta)\|_{L_\infty((0,T)\times\Omega)}}{\uk^{1/2}} \|\delta \theta^{k-1}\|_{L_2(0,T;L_6)}\kappa^{1/2}\|\nabla \delta \theta^k\|_{L_2(0,T;L_2)} 
\|\nabla \theta^k\|_{L_\infty(0,T;L_3)} \\
&\le \uk/4\|\nabla(\delta\theta^k)\|^2_{L_2((0,T)\times\Omega)}+CM
\left(\frac{\|\kappa'(\theta)\|_{L_\infty((0,T)\times\Omega)}}{\uk}\right)^2 \uk \|\nabla (\delta \theta^{k-1})\|^2_{L_2((0,T)\times\Omega)},
\end{split}
\end{equation}
where $M$ is the constant given in \eqref{k-1}. We continue with the remaining terms

\begin{equation}
\begin{split}
\int_0^t\int_\Omega|&
\nu(\theta^{k-1})\D(\delta v^k)(\D(v^k)+\D(v^{k-1}))\delta\theta^k|
\ dx\ d\tau\\&\le
C\int_0^t\|\nu(\theta^{k-1})\|_{L_\infty(\Omega)}\|\D(\delta v^k)\|_{L_2(\Omega)}\|\D(v^k)+\D(v^{k-1})\|_{L_3(\Omega)}\|\delta\theta^k\|_{L_6(\Omega)}\ d\tau\\
&\le
C\|\nu(\theta^{k-1})\|_{L_\infty((0,T)\times\Omega)}
\|\D(v^k)+\D(v^{k-1})\|_{L_\infty(0,T;L_3)}^2\|\D(\delta v^k)\|^2_{L_2((0,T)\times\Omega)}\\
&+\uk/4\|\nabla(\delta\theta^k)\|^2_{L_2((0,T)\times\Omega)},
\end{split}
\end{equation}

\begin{equation}
\begin{split}
\int_0^t\int_\Omega|
(\nu(\theta^{k-1})-\nu(\theta^{k-2}))&\D^2(v^{k-1})\delta\theta^k|\ dx\ d\tau\le C\int_0^t\int_\Omega |\delta\theta^{k-1}\D^2(v^{k-1})\delta\theta^k|
\ dx\ d\tau\\
&\le \int_0^T\|\delta\theta^{k-1}\|_{L_3(\Omega)}\|\D^2(v^{k-1})\|_{L_2(\Omega)}\|\delta \theta^k\|_{L_6(\Omega)}\ d\tau\\
&\le  \|\D^2(v^{k-1})\|^2_{L_2((0,T)\times\Omega)}\|\delta\theta^{k-1}\|^2_{L_2(0,T;W^1_{p/2})}+\uk/4\|\delta \theta^k\|^2_{L_2(0,T;W^1_{p/2})},
\end{split}
\end{equation}

\begin{equation}\begin{split}
\|\delta \theta^k(t)\|^2_{L_2(\Omega)}+\uk\int_0^t\|\nabla(\delta \theta^k)\|^2_{L_2(\Omega)}\ d\tau&\le
 C M
\left(\frac{\|\kappa'(\theta)\|_{L_\infty((0,T)\times\Omega)}}{\uk}\right)^2 \uk
\|\nabla(\delta \theta^{k-1})\|^2_{L_2((0,T)\times\Omega)}\\&+ C\|\D(\delta v^{k})\|_{L_2((0,T)\times\Omega)}^2.
\end{split}\end{equation}

Now we fix $t$ so small that $C(T)t \leq 1$. 
Multiplying \eqref{est-delta-v} by a sufficiently large constant to absorb the term $\|\D(\delta v^k)\|^2_{L_2((0,T)\times\Omega)}$ we obtain

\begin{equation}
\begin{split}
2\|\delta v^k(t)\|^2_{L_2(\Omega)}+2\|\delta \theta^k(t)\|^2_{L_2(\Omega)}+\un \int_0^t\|\D(\delta v^k)\|^2_{L_2(\Omega)}\ d\tau
+\uk \int_0^t\|\nabla(\delta \theta^k)\|^2_{L_2(\Omega)}\ d\tau\\
\le
\frac 12  \big( \int_0^t \un \|\D(\delta v^{k-1})\|_{L_2(\Omega)}^2 d\tau +
\uk \int_0^t \|\nabla (\delta \theta^{k-1})\|^2_{L_2(\Omega)}\big).
\end{split}
\end{equation}

This way we proved the convergence of  the sequence 
$(\varrho^k,  v^k, \theta^k)$ in  space $L_\infty(0,T;L_2(\Omega))$, hence we get the strong convergence in all the spaces intermediate with the
ones defined by the a priori estimates. But also we have weak and weak-$\ast$ compactness, thus we find that
the limit must fulfill the original system (\ref{main}). The existence is established on the time interval $[0,t]$. Since all constants in the above considerations 
depended only on $T$, so proceeding step by step on time intervals $[kt,(k+1)t]$ we obtain the existence for the whole interval $[0,T]$. We shall just recall that $T$ is an arbitrary 
large number fixed at the beginning of our  proof of existence. Thus, we conclude that the constructed solutions exist globally in time, since they obey the a priori estimate.

Quasilinearity of the system removes all questions
concerning the convergence of nonlinear terms. Of course, the contraction in the large space yields also uniqueness to the system. The main theorem is proved.

\end{proof}

We complete our analysis of  system (\ref{main}) by stating  the last result concerning the long time behavior of solutions.

\begin{theorem}\label{T:asympt}
Let the assumptions of Theorem~\ref{Th:main} be satisfied. Then $(v,\theta)$ solving  \eqref{main} decay at infinity, i.e. there exist constants $\alpha, \beta, \gamma>0$  such that
\begin{equation}
\|v(t)\|_{L_2(\Omega)}\sim e^{-\un t}, \quad\|v(t)\|_{W^{2-2/p-\epsilon}_p(\Omega)}\sim e^{-\alpha\un t}, 
\end{equation}
and
\begin{equation}
\|\theta(t)-\ut- \tilde \theta_\infty \|_{L_2(\Omega)}
\sim e^{-\beta t}, \quad \|\theta(t)-\ut- \tilde \theta_\infty \|_{W^{2-4/p}_{p/2}(\Omega)}
\sim e^{-\gamma t}
\end{equation}
for a constant $\tilde \theta_\infty>0$.  Moreover, $\varrho$ solving  \eqref{main} satisfies
\begin{equation}\label{rho-asa}
\sup_t\|\nabla \varrho(t)\|_{L_p(\Omega)} < \infty.
\end{equation}
\end{theorem}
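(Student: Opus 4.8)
The plan is to establish the four decay statements in the order velocity $\to$ temperature $\to$ density, exploiting throughout that the lower bound $\theta\ge\ut$ from Lemma~\ref{est-theta} forces $\nu(\theta)\ge\un$ for all times, so that the dissipation never degenerates. First I would prove the $L_2$-decay of $v$. Starting from the energy identity \eqref{i12}, namely $\frac{d}{dt}\int_\Omega\frac12\varrho|v|^2\,dx=-\int_\Omega\nu(\theta)\D^2(v)\,dx$, I would bound the dissipation from below: since $\nu(\theta)=\theta^m\ge\ut^m=\un$ and $v$ vanishes on $\d\Omega$, Korn's and Poincar\'e's inequalities give $\int_\Omega\nu(\theta)\D^2(v)\,dx\ge c\,\un\|v\|_{L_2(\Omega)}^2$, while \eqref{i15} yields $1-c\le\varrho\le1+c$ so that $\int_\Omega\frac12\varrho|v|^2\,dx$ is comparable to $\|v\|_{L_2(\Omega)}^2$. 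A Gronwall argument then produces $\|v(t)\|_{L_2(\Omega)}^2\le Ce^{-c\un t}\|v_0\|_{L_2(\Omega)}^2$, matching the stated rate up to the constant in the exponent. To upgrade this to the $W^{2-2/p-\epsilon}_p$ decay I would apply the maximal regularity estimate \eqref{stokes1} for the perturbed Stokes system \eqref{a1}$_2$ on the unit time slabs $[k,k+1]$: the right-hand side is controlled by the a priori bound of Section~\ref{S:apriori} and by $\|v\|_{L_2}$, which is now exponentially small, so on each slab the full $W^{1,2}_p$ norm decays geometrically; interpolating the exponentially decaying $L_2$ norm against the uniformly bounded $W^{2-2/p}_p$ norm loses the $\epsilon$ and gives the rate $e^{-\alpha\un t}$ with $\alpha\in(0,1)$.

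For the temperature I would first show that the dissipative source is time-integrable and exponentially small. Indeed \eqref{i13} gives $\frac{d}{dt}\int_\Omega\varrho(\theta-\ut)\,dx=\int_\Omega\nu(\theta)\D^2(v)\,dx$, and by the total energy conservation together with $\|v\|_{L_2}\to0$ the integral $\int_0^\infty\int_\Omega\nu(\theta)\D^2(v)\,dx\,dt$ converges; combined with the exponential decay of $v$ above, the right-hand side decays like $e^{-c\un t}$. Hence the weighted mean $\int_\Omega\varrho(\theta-\ut)\,dx$ converges to a finite limit, and this fixes the constant $\tilde\theta_\infty$ as the final equilibrium temperature dictated by the conserved energy and the limiting mass distribution. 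Writing $w=\theta-\ut-\tilde\theta_\infty$ I would split it into its spatial mean $\{w\}$ and its fluctuation $w-\{w\}$. The fluctuation solves a Neumann heat problem of type \eqref{a9} on the zero-mean subspace, where $-\uk\Delta$ has a spectral gap $\uk\lambda_1>0$; testing against $w-\{w\}$ and using the exponentially small source yields $\|w-\{w\}\|_{L_2}\sim e^{-\beta t}$. The mean $\{w\}$ decays because its derivative is controlled by $\int_\Omega\nu(\theta)\D^2(v)\,dx$, whose tail integral is exponentially small, so $|\{w\}(t)|\le\int_t^\infty|\cdots|\,d\tau\lesssim e^{-c\un t}$. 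Combining the two pieces and bootstrapping once more through \eqref{l1b} on time slabs promotes the $L_2$ rate to the $W^{2-4/p}_{p/2}$ rate $e^{-\gamma t}$.

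Finally, the density bound \eqref{rho-asa} follows from differentiating the transport equation \eqref{main}$_1$: $\nabla\varrho$ satisfies $\d_t\nabla\varrho+v\cdot\nabla(\nabla\varrho)=-(\nabla v)^\top\nabla\varrho$, whence $\frac{d}{dt}\|\nabla\varrho\|_{L_p(\Omega)}\le\|\nabla v\|_{L_\infty(\Omega)}\|\nabla\varrho\|_{L_p(\Omega)}$ and $\|\nabla\varrho(t)\|_{L_p(\Omega)}\le\|\nabla\varrho_0\|_{L_p(\Omega)}\exp\big(\int_0^t\|\nabla v(\tau)\|_{L_\infty(\Omega)}\,d\tau\big)$. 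For $p>7$ the embedding $W^{2-2/p-\epsilon}_p(\Omega)\hookrightarrow W^1_\infty(\Omega)$ holds for small $\epsilon$, so $\|\nabla v\|_{L_\infty}\lesssim\|v\|_{W^{2-2/p-\epsilon}_p}\sim e^{-\alpha\un t}$ is integrable on $(0,\infty)$, the exponential factor stays bounded uniformly in $t$, and \eqref{rho-asa} follows. The main obstacle I anticipate is the temperature mean: as the remark after Lemma~\ref{l:mean} stresses, there is no a priori control on $\{\theta-\ut\}$, so the existence and identification of $\tilde\theta_\infty$ must be extracted a posteriori from energy conservation and from the exponential decay of the dissipation, and one must carefully account for the fact that $\varrho$ is only close to, not equal to, a constant when passing between the $\varrho$-weighted and unweighted means.
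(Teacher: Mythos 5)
Your proposal is correct and follows the same overall architecture as the paper -- exponential decay of the dissipation drives everything, $\tilde\theta_\infty$ is identified from the time-integrated heat production $\frac{1}{{\rm mass}}\bigl(\int_\Omega\varrho_0(\theta_0-\ut)\,dx+\int_0^\infty\int_\Omega\nu(\theta)\D^2(v)\,dx\,dt\bigr)$, and the density bound is exactly the paper's Gronwall argument using the integrability of $\|\nabla v\|_{L_\infty}$ granted by the decay \eqref{asa}. The one genuine difference is in how the temperature mean is handled. The paper subtracts the \emph{time-dependent} spatially homogeneous function $\tilde\theta(t)$, chosen precisely so that $\int_\Omega\varrho\,(\theta-\ut-\tilde\theta)\,dx=0$ for all $t$; this makes the Poincar\'e inequality applicable directly to $\theta-\ut-\tilde\theta$, the source in the shifted equation becomes the fluctuation $\nu(\theta)\D^2(v)-\{\nu(\theta)\D^2(v)\}$, and $\tilde\theta_\infty=\tilde\theta(\infty)$ is recovered at the end since $|\tilde\theta(t)-\tilde\theta_\infty|$ is controlled by the exponentially small tail of the dissipation integral. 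You instead subtract the \emph{constant} $\tilde\theta_\infty$ from the start and split $w=\theta-\ut-\tilde\theta_\infty$ into $\{w\}$ and $w-\{w\}$, treating the fluctuation by the spectral gap and the mean by the same tail estimate. The two routes are equivalent in substance; the paper's shift is slightly slicker because the zero-weighted-mean property holds identically in time, whereas your split forces you to convert between the $\varrho$-weighted mean (which is what the equation controls) and the unweighted mean $\{w\}$ -- a conversion you correctly flag, and which costs only a small multiple (of order $c$ from \eqref{ass1}) of the fluctuation norm. Incidentally, the paper itself glosses over this same point: its identity $\varrho\,\tilde\theta_t=\{\nu(\theta)\D^2(v)\}$ is exact only for constant $\varho$, so your explicit caution is warranted rather than superfluous. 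Two further minor remarks: your derivation of the $L_2$ velocity decay (energy identity \eqref{i12} plus $\nu(\theta)\ge\un$, Korn, Poincar\'e, Gronwall) spells out what the paper compresses into the citation of \eqref{k0}; and your maximal-regularity-on-slabs step for the $W^{2-2/p-\epsilon}_p$ decay is redundant, since the interpolation of the exponentially decaying $L_2$ norm against the uniformly bounded $W^{2-2/p}_p$ norm -- which you also invoke -- is already the whole of the paper's argument there.
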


\begin{proof}
Estimate \eqref{k0} allows to conclude that
\begin{equation}
\|v(t)\|_{L_2(\Omega)}\sim e^{-\un t} \quad \mbox{and}\quad \|v(t)\|_{W^{2-2/p}_p(\Omega)}\le c.
\end{equation}
Hence by interpolation we conclude that there exists a constant $\alpha>0$
\begin{equation}\label{asa}
\|v(t)\|_{W^{2-2/p-\epsilon}_p(\Omega)}\sim e^{-\alpha\un t}.
\end{equation}
In the next step we will use this information to conclude the asymptotic behavior of the temperature. 
Looking at $(\ref{main})_4$, having (\ref{asa}), we have
\begin{equation}
 \frac{d}{dt}\int_\Omega \varrho(\theta -\ut) dx = \int_\Omega \nu(\theta)\D^2(v) dx,
\end{equation}
thus
\begin{equation}
 \int_\Omega \varrho(\theta(t)-\ut)dx=\int_\Omega \varrho_0 (\theta_0-\ut)dx + \int_0^t \int_\Omega \nu (\theta)\D^2(v) dxd\tau=: \int_\Omega \varrho \tilde \theta(t) dx
\end{equation}
where $\tilde \theta(\cdot)$ is a function of time given by
\begin{equation}
 \tilde \theta(t)=\frac{1}{{\rm mass}} \big( \int_\Omega \varrho_0 (\theta_0-\ut)dx + \int_0^t \int_\Omega \nu (\theta)\D^2(v) dxd\tau\big).
\end{equation}
Then the energy equation is restated as follows
\begin{multline}
 \varrho (\theta -\ut - \tilde\theta)_t + \varrho v \cdot \nabla (\theta -\ut - \tilde\theta) -
\div \kappa(\theta)\nabla (\theta -\ut - \tilde\theta)=\\
\nu(\theta)\D^2(v) - \{\nu(\theta)\D^2(v)\}.\qquad
\end{multline}
From \eqref{asa} we estimate the right-hand side of the heat equation, $\nu(\theta)\D^2(v)\sim e^{-\alpha\un t}$.  Then
\begin{equation}
\|\theta(t)-\ut -\tilde \theta\|_{L_2(\Omega)}
\sim e^{-\beta t}, 
\quad
\|\theta(t)-\ut\|_{W^{2-4/p}_{p/2}(\Omega)}
\sim e^{-\gamma t}.
\end{equation}
Note we  used the Poincar\'e inequality, since our construction yields
\begin{equation}
 \int_\Omega \varrho (\theta -\ut - \tilde\theta)dx =0 \mbox{ for all t}.
\end{equation}
The constant $\tilde \theta_\infty = \tilde \theta(\infty)$.

To observe that  \eqref{rho-asa} holds, we differentiate equation \eqref{main}$_{(1)}$ to obtain
\begin{equation}
\frac{d}{dt}\|\nabla \varrho\|_{L_p(\Omega)}\le \|\nabla v\|_{L_\infty(\Omega)}\|\nabla \varrho\|_{L_p(\Omega)}.
\end{equation}
With help of Gronwall lemma we conclude that 
\begin{equation}
\sup_t\|\nabla \varrho\|_{L_p(\Omega)}\le \|\nabla \varrho_0\|_{L_p(\Omega)} \exp\left(\int_0^\infty\|\nabla v\|_{L_\infty(\Omega)}\ dt\right).
\end{equation}
Hence  by (\ref{asa}) the proof is complete. 
\end{proof}

{\bf Acknowledgments.} The work of both authors has been partly supported by Polish NCN grant No  2014/13/B/ST1/03094.

\bibliographystyle{abbrv}
\bibliography{budyn}
\end{document}